\newtheorem{result}{Main result}
\newtheorem{theorem}{Theorem}[section]
\newtheorem{lemma}[theorem]{Lemma}
\newtheorem{corollary}[theorem]{Corollary}
\theoremstyle{definition}
\theoremstyle{remark}
\newtheorem{remark}[theorem]{Remark}
\definecolor{darkblue}{rgb}{0.0,0.0,0.4}
\newcommand{\N}{\mathbb{N}}
\newcommand{\Z}{\mathbb{Z}}
\newcommand{\R}{\mathbb{R}}
\newcommand{\Pol}{\mathbb{P}}
\newcommand{\tstA}{\mathscr{A}}
\newcommand{\tstP}{\mathscr{P}}
\newcommand{\tstO}{\mathscr{O}}
\newcommand{\eps}{\varepsilon}
\newcommand{\vol}{\mathrm{vol}}
\newcommand{\diam}{\mathrm{diam}}
\newcommand{\osc}{\mathrm{osc}}
\newcommand{\intr}{\mathrm{int}}
\newcommand{\conf}{\mathrm{conf}}
\newcommand{\refine}{\mathrm{refine}}
\newcommand{\shape}{\sigma_{\textrm{s}}}
\newcommand{\grade}{\sigma_{\textrm{g}}}
\title{On the convergence theory of adaptive mixed finite element methods for the Stokes problem}
\author{Tsogtgerel Gantumur}
\institution{McGill University}
\date{\today}                                           
\begin{document}

\maketitle

\begin{abstract}
In this paper, we establish a {\em conditional} optimality result for an adaptive mixed finite element method for the stationary Stokes problem
discretized by the standard Taylor-Hood elements, under the assumption of the so-called general quasi-orthogonality.
Optimality is measured in terms of a modified approximation class defined through the total error,
as is customary since the seminal work of Cascon, Kreuzer, Nochetto and Siebert.
The second part of the paper is independent of optimality results, and concerns interrelations between the 
modified approximation classes and the standard approximation classes (the latter defined through the energy error).
Building on the tools developed in the papers of Binev, Dahmen, DeVore, and Petrushev, and of Gaspoz and Morin,
we prove that the modified approximation class coincides with the standard approximation class,
modulo the assumption that the data is regular enough in an appropriate scale of Besov spaces.
\end{abstract}

\tableofcontents

\section{Introduction}

We consider adaptive mixed finite element methods for the {\em stationary Stokes problem}
\begin{equation}\label{e:Stokes-classical}
\begin{split}
- \Delta u + \nabla p &= f, \\
\nabla\cdot u &= 0,
\end{split}
\end{equation}
discretized by the standard Taylor-Hood elements.
Here $u:\Omega\to\R^n$ is the unknown {\em velocity field}, $p:\Omega\to\R$ is the unknown {\em pressure field},
$f\in L^2(\Omega,\R^n)$ is the given {\em data}, and $\Omega\subset\R^n$ is a bounded polyhedral domain
with Lipschitz boundary.
One can think of the space dimension to be $n=2$ or $n=3$.
We impose the {\em no slip} boundary condition $u|_{\partial\Omega}=0$ on the velocity field,
and in order to ensure uniqueness, we require that the pressure field is of {\em vanishing mean}.

Convergence theory of adaptive finite element methods has been an active field of research especially since the influential paper \cite*{Dorf96}.
A near complete understanding has been achieved for Poisson-type problems, cf. \cite*{MNS00,BDD04,Stev07,CKNS08}.
There is a growing body of literature on adaptive discretization of saddle point problems such as \eqref{e:Stokes-classical},
but the question of convergence rate for adaptive mixed finite element methods with standard Taylor-Hood elements has been entirely open.

Building on the pioneering works \cite*{DHU00,BMN02},
optimal convergence rates have been established for certain elliptic reformulations of the Stokes problem, in \cite*{Kond06,KS08}.
Moreover, for nonconforming discretizations of the Stokes problem, the same question has been investigated in \cite*{BeMa11,HuXu13,CPR13}.
On the other hand, adaptive mixed finite element methods for the Poisson problem have been treated in \cite*{CaHo06,BeMa08,CHX09}.

Getting back to the discussion of adaptive mixed finite element methods with standard Taylor-Hood elements,
the first proof of convergence for such a method was published in \cite*{MSV08}, in which the authors modified the standard {\em a posteriori} error estimator from \cite{Verf89}.
This proof was improved in \cite{Sieb11} to incorporate the  {\em a posteriori} error estimator from \cite{Verf89} into the analysis.
Note that the aforementioned results do not provide information about the {\em rate} of convergence.

In the current work, 
under the assumption of the so-called {\em general quasi-orthogonality},
we establish bounds on the convergence rates of adaptive mixed finite element methods for the Stokes problem discretized by standard Taylor-Hood elements,
and show that these bounds are in a certain sense optimal.
This was motivated by the conceptual understanding of the role played by the general quasi-orthogonality in the analysis of adaptive methods, cf. \cite*{FFP12,CFPP14}.
What the concept of general quasi-orthogonality provides is a framework to potentially exploit the Galerkin orthogonality for non-coercive or strongly nonsymmetric problems,
a bottleneck that has been faced by researchers for some time.

In order to discuss our other results, we need to fix some notation and terminologies.
The solution $(u,p)\in H^1(\Omega,\R^n)\times L^2(\Omega)$ of \eqref{e:Stokes-classical} is said to be a member of the {\em standard approximation class} $\tstA^s$
if there exists a sequence of conforming triangulations $P_1,P_2,\ldots$ of $\Omega$, 
obtained from a fixed initial triangulation $P_0$ by applications of newest vertex bisections,
such that $\#P_N-\#P_0\leq N$ and
\begin{equation}\label{e:standard-approx-class-error}
\|u-u_N\|_{H^1(\Omega,\R^n)} + \|p-p_N\|_{L^2(\Omega)} \leq C N^{-s} ,
\qquad \textrm{for all}\quad N ,
\end{equation}
where $(u_N,p_N)$ is the Galerkin approximation of $(u,p)$ from the Taylor-Hood finite element space (of degree $d$ for the velocity field and degree $d-1$ for the pressure field) defined over the triangulation $P_N$, and $C$ is a constant independent of $N$.
Suppose that we have an adaptive algorithm (based on the same Taylor-Hood spaces and newest vertex bisections) that takes the data $f\in L^2(\Omega,\R^n)$ and the initial triangulation $P_0$ of $\Omega$ as its input, 
and produces the sequence of triangulations $P_1,P_2,\ldots$, and the corresponding Galerkin solutions $(u_k,p_k)$ for $k=0,1,\ldots$.
Then it stands to reason to say that the algorithm converges at the {\em optimal rate} if 
\begin{equation}
\|u-u_k\|_{H^1(\Omega,\R^n)} + \|p-p_k\|_{L^2(\Omega)} \leq C (\#P_k-\#P_0)^{-s} ,
\qquad \textrm{for all}\quad k ,
\end{equation}
whenever $(u,p)\in\tstA^s$ for some $s>0$.
For the Poisson equation $\Delta u = g$, such an optimality result was obtained in \cite{Stev07}, for an adaptive algorithm that uses an inner loop for resolving the data $g$.
It was later discovered in \cite{CKNS08} that if we do not add an inner iteration, 
the algorithm remains optimal {\em provided} that we modify the approximation class to include in its definition a measure of resolution of $g$.
The analogue of this modification in our setting is as follows.
The solution $(u,p)\in H^1(\Omega,\R^n)\times L^2(\Omega)$ of \eqref{e:Stokes-classical} is said to be a member of the {\em modified approximation class} $\tstA_*^s$ 
if there exists a sequence of conforming triangulations $P_1,P_2,\ldots$ of $\Omega$, 
obtained from a fixed initial triangulation $P_0$ by applications of newest vertex bisections,
such that $\#P_N-\#P_0\leq N$ and
\begin{equation}\label{e:modified-approx-class-error}
\|u-u_N\|_{H^1(\Omega,\R^n)} + \|p-p_N\|_{L^2(\Omega)} + \osc_N(f) \leq C N^{-s} ,
\qquad \textrm{for all}\quad N ,
\end{equation}
where $(u_N,p_N)$ is the Galerkin approximation of $(u,p)$ from the Taylor-Hood finite element space defined over the triangulation $P_N$,
and $\osc_N(f)$ is the so-called {\em oscillation} term, which depends only on $P_N$ and $f$.
Note that $\tstA_*^s$ is indeed a space of pairs $(u,p)$, because $f$ is completely determined by $u$ and $p$.
Note also that $\tstA_*^s\subset\tstA^s$, since the membership of $\tstA_*^s$ has the extra requirement that the oscillation $\osc_N(f)$ is suitably reduced as $N$ grows.
Although reducing the oscillation is by no means among our initial ambitions, it turns out that $\tstA_*^s$ is completely natural from the perspective of adaptive methods.
In particular, the quantity on the left hand side of \eqref{e:modified-approx-class-error}, called the {\em total error}, is equivalent to the error estimator,
and so an algorithm that only ``sees'' the error estimator will have to reduce the oscillation anyway.
With respect to the approximation classes $\tstA_*^s$, we have the following {\em conditional} optimality result, which will be proved in Section \ref{s:convergence}.
The aforementioned general quasi-orthogonality assumption appears here in \eqref{e:gen-quasorth}.

\begin{result}\label{mr:rate-mod}
Suppose that $(u,p)\in\tstA_*^s$ for some $s>0$, 
and let $P_0,P_1,\ldots$ be the sequence of triangulations generated by the adaptive algorithm defined in Section \ref{s:convergence},
with $(u_k,p_k)$, $k=0,1,\ldots$, being the corresponding Galerkin solutions.
In addition, assume that there exists a constant $c>0$ such that
\begin{equation}\label{e:gen-quasorth}
\begin{split}
\sum_{k=\ell}^{\infty} \Big( \|u_k-u_{k+1}\|_{H^1(\Omega,\R^n)}^2 + \|p_k-p_{k+1}&\|_{L^2(\Omega)}^2 \Big) \\
&\leq 
c \Big( \|u-u_\ell\|_{H^1(\Omega,\R^n)}^2 + \|p-p_\ell\|_{L^2(\Omega)}^2 \Big) ,
\end{split}
\end{equation}
for any integer $\ell$.
Then we have
\begin{equation}\label{e:mr-rate-mod}
\|u-u_k\|_{H^1(\Omega,\R^n)} + \|p-p_k\|_{L^2(\Omega)} + \osc_k(f) \leq C (\#P_k-\#P_0)^{-s} ,
\qquad \textrm{for all}\quad k ,
\end{equation}
where $\osc_k(f)$ denotes the oscillation on the mesh $P_k$.
\end{result}

We have this conditional optimality result for an algorithm that uses the {\em a posteriori} error estimator from \cite{Verf89}.
As a theoretical tool to be employed in the analysis, we introduce a supposedly new {\em a posteriori} error estimator, which also yields optimal algorithms.
For algorithms that use the modified estimator from \cite{MSV08}, we establish geometric error reduction,
but we were unable to prove optimal convergence rates, because of the apparently nonlocal character of the estimator.

There is a remark to be made on the nature of the constant $C$ that appears in \eqref{e:mr-rate-mod}.
From the experience with Poisson-type problems, one would expect that the constant $C$ must be of the form
$C=c|(u,p)|_{\tstA_*^s}$, where $c$ does not depend on $(u,p)$, 
and $|(u,p)|_{\tstA_*^s}$ is the norm of $(u,p)$ in the space $\tstA_*^s$.
In Main result \ref{mr:rate-mod}, however, we do not rule out the possibility that $c$ depends on $(u,p)$.

The final part of the current paper is independent of optimality results, and concerns interrelations between the 
modified approximation classes $\tstA_*^s$ and the standard approximation classes $\tstA^s$.
The pairs $(u,p)$ contained in the gap $\tstA^s\setminus\tstA_*^s$ can in principle be approximated with the rate $s$ by some approximation procedure,
but Main result \ref{mr:rate-mod} cannot guarantee that adaptive finite element methods will converge with the same rate.
Our approach to this problem is to show that $(u,p)$ is {\em not} in the gap, as long as the data $f$ have some regularity in terms of a scale of Besov spaces.
Namely, the following result will be proved in Section \ref{s:approximation}.

\begin{result}\label{mr:Besov-rate}
Let $f\in B^\alpha_{q,q}(\Omega,\R^n)$ for some $0<q<\infty$ and $\alpha\geq\frac{n}q - \frac{n}2$ satisfying $0<\alpha<d-1+\max\{0,\frac1q-1\}$.
Then $(u,p)\in \tstA^s$ implies $(u,p)\in\tstA_*^s$, with $s=\frac{\alpha+1}n$.
\end{result}

\begin{figure}[ht]
\begin{center}
\includegraphics{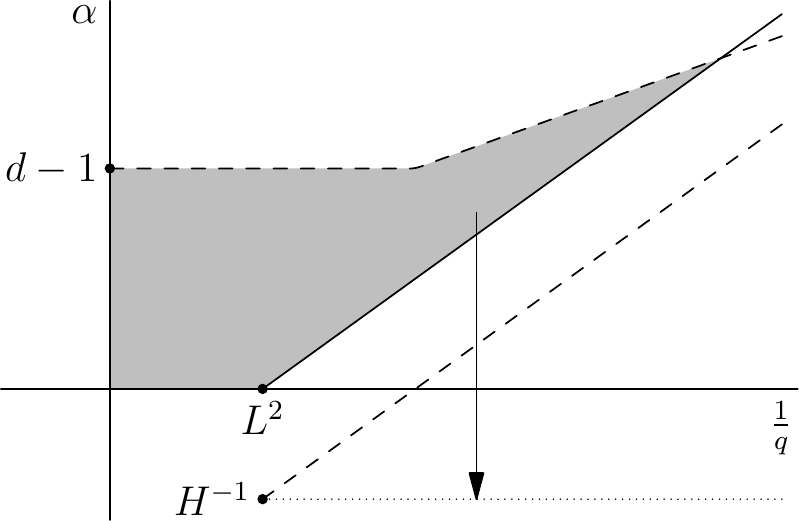}
\end{center}
\caption{The shaded region represents the pairs $(\frac1q,\alpha)$ allowed in Main result \ref{mr:Besov-rate}.
If the function $f\in B^{\alpha}_{q,q}$ was being adaptively approximated in the $H^{-1}$ norm, 
and if $B^{\alpha}_{q,q}\subset H^{-1}$, then we would expect
the rate of convergence to be determined by the vertical offset of the point $(\frac1q,\alpha)$ in relation to the dotted line.
Main result \ref{mr:Besov-rate} basically says that the same convergence rate is restored for the total error as long as $B_{q,q}^\alpha$ is embedded into $L^2$,
and $(u,p)$ is regular enough (to be in $\tstA^s$).}
\end{figure}

To prove this result, we adapt the techniques from \cite*{BDDP02} and \cite{GM13}, 
where the said techniques have been used to show embeddings of the form $B_{q,q}^\alpha\subset\tstA^s$.
Earlier influential works in the same vein include \cite{BirSol67} and \cite{Osw90}.
We also establish embeddings of the form $B_{q,q}^\alpha\subset\tstA^s$ in the context of the Stokes problem, see Theorem \ref{t:direct-H1}.
Note that the regularity of the Stokes problem in the same Besov scale has been studied in \cite{Dahl99c}.

This paper is organized as follows.
In the next section, we will discuss assumptions on the triangulations, the refinement procedures, and the finite element spaces for discretizing the Stokes problem.
Then in Section \ref{s:error-estimators}, we introduce three kinds of {\em a posteriori} error estimators, and establish some of their properties.
The conditional optimality result mentioned above, together with a theorem on geometric error reduction will be proved in Section \ref{s:convergence}.
Finally, in Section \ref{s:approximation} we will deal with interplays between the approximation classes and Besov spaces.

\section{Discretization of the Stokes problem}
\label{s:disc}

Let $\Omega\subset\R^n$ be a polyhedral domain with Lipschitz boundary, where $n=2$ or $n=3$.
We call a collection $P$ of triangles (or tetrahedra) a {\em partition} of $\Omega$
if  $\overline\Omega=\bigcup_{\tau\in P}\overline\tau$, and
$\tau\cap\sigma=\varnothing$ for any two different $\tau,\sigma\in P$. 
For refining the partitions we use the so called {\em newest vertex bisection} algorithm;
details can be found in \citet*{BDD04,Stev08}.
A partition $P'$ is called a {\em refinement} of $P$ and
denoted $P\preceq P'$ if $P'$
can be obtained by replacing zero or more $\tau\in P$ by its
children, or by a recursive application of this procedure.
Throughout this paper, we only consider {\em conforming} partitions that are refinements of some {\em fixed} conforming partition $P_0$ of
$\Omega$. 
The newest vertex bisection procedure produces {\em shape regular} partitions, meaning that
\begin{equation}\label{e:shape-regular}
\shape=\sup \left\{ \frac {(\diam\,\tau)^n}{\vol(\tau)} : \tau\in P,\,P\in\conf(P_0)
  \right\} < \infty,
\end{equation}
where $\conf(P_0)$ denotes the family of all conforming partitions that are refinements of $P_0$.
This family is \emph{graded} (or \emph{locally quasi-uniform}), in the sense that 
\begin{equation}\label{e:K-mesh}
\grade=\sup \left\{\frac {\diam\,\sigma}{\diam\,\tau}: \sigma,\tau\in
P,\,\overline\sigma\cap\overline\tau\neq\varnothing,\,P\in\conf(P_0)\right\} < \infty.
\end{equation}
Note that the shape regularity and gradedness
together imply \emph{local finiteness}, meaning that the number of triangles
meeting at any given point is bounded by a constant that depends only
on $\shape$, $\grade$, and $n$.

In general, a naive refinement of a conforming partition would produce a non-conforming partition, 
so in order to ensure conformity, one must perform additional refinements. 
This procedure is called {\em completion}, and a quite satisfactory theory of completion has been developed in \cite{BDD04,Stev08}.
We consider the whole process of obtaining a conforming partition from an initial conforming partition as a single refinement step that works in the category of conforming partitions.
Given a partition $P\in\conf(P_0)$ and a set $R\subset P$ of its triangles,
the refinement step produces $P'\in\conf(P_0)$,
such that $P\setminus P'\supseteq R$, i.e., the triangles in $R$ are refined at least once.
Let us denote it by $P'=\refine(P,R)$.
We have the following on its efficiency:
If $\{P_k\}\subset\conf(P_0)$ and $\{R_k\}$ are sequences such that $P_{k+1}=\refine(P_k,R_k)$
and $R_k\subset P_k$ for $k=0,1,\ldots$,
then
\begin{equation}\label{e:complete}
\#P_\ell-\#P_0\leq C_c\sum_{k=0}^{\ell-1} \#R_k,
\qquad \ell=1,2,\ldots,
\end{equation}
where $C_c>0$ is a constant.

Another notion we need is that of \emph{overlay} of partitions:
We assume that there is an operation
$\oplus:\conf(P_0)\times\conf(P_0)\to\conf(P_0)$ satisfying
\begin{equation}\label{e:overlay}
P\oplus Q\succeq P,
\qquad
P\oplus Q\succeq Q,
\qquad
\textrm{and}
\qquad
\#(P\oplus Q)\leq\#P+\#Q-\#P_0,
\end{equation}
for $P,Q\in\conf(P_0)$.
This assumption is verified in \cite{Stev07,CKNS08}, 
where $P\oplus Q$ is taken to be the smallest and common conforming refinement of $P$ and $Q$.

Let $V=H^1_0(\Omega,\R^n)$ and $Q = L^2(\Omega)/\R$, the latter being the space of $L^2$ functions with vanishing mean,
and let $X=V\times Q$ be the Hilbert space equipped with the norm
\begin{equation}
\|(v,q)\|_{V\times Q} = \big( \|v\|_V^2 + \|q\|_Q^2 \big)^{\frac12} .
\end{equation}
We consider the following weak formulation of the Stokes problem \eqref{e:Stokes-classical}: Find $(u,p)\in X$ satisfying
\begin{equation}\label{e:Stokes-weak}
a(u,v) - b(v,p) - b(u,q) = \langle f,v\rangle_{L^2},
\qquad \textrm{for all}\quad
(v,q) \in X ,
\end{equation}
where the bilinear forms $a:V\times V\to\R$ and $b:V\times Q\to\R$ are defined respectively by
\begin{equation}
a(u,v) = \int_\Omega {\textstyle\sum_{i,k}}\partial_i u_k \partial_i v_k,
\qquad \textrm{and} \qquad
b(u,q) = \int_\Omega q{\textstyle\sum_{i}}\partial_iu_i,
\end{equation}
and $\langle\cdot,\cdot\rangle_{L^2}$ denotes the $L^2$ inner product.
It is known that for any $f\in L^2(\Omega,\R^n)$, the problem \eqref{e:Stokes-weak} admits a unique solution $(u,p)\in X$, see, e.g., \cite{BrezFort91}.
In fact, the operator $A:X \to X'$ defined by
\begin{equation}\label{e:Stokes-operator}
\langle A(u,p),(v,q) \rangle = a(u,v) - b(v,p) - b(u,q) ,
\qquad
(u,p), (v,q) \in X ,
\end{equation}
is invertible, where $\langle\cdot,\cdot\rangle$ denotes the duality pairing between $X'$ and $X$.
It is easy to see that $A$ is linear, bounded, and self-adjoint.
In particular, by Banach's bounded inverse theorem, the inverse $A^{-1}:X'\to X$ is bounded as well.
In terms of the operator $A$, the Stokes problem \eqref{e:Stokes-weak} can be written as
\begin{equation}\label{e:Stokes-operator-form}
A(u,p) = F ,
\end{equation}
where the linear functional $F\in X'$ is defined by
\begin{equation}\label{e:Stokes-rhs-op}
\langle F,(v,q) \rangle = \langle f,v\rangle_{L^2},
\qquad \textrm{for all}\quad
(v,q) \in X.
\end{equation}

In the discretization of \eqref{e:Stokes-weak}, we will use the classical finite element spaces
introduced by \cite{TaylHood73}.
Given a partition $P\in\conf(P_0)$, we define the discontinuous piecewise polynomial space ${S}_{P}^d$ by
\begin{equation}\label{e:fem-space}
{S}_{P}^d=\left\{u\in L^\infty(\Omega) : u|_{\tau}\in \mathbb{P}_{d}\,\forall\tau\in P\right\},
\end{equation}
where $\mathbb{P}_{d}$ denotes the set of polynomials of degree less
than or equal to $d$.
Then the Taylor-Hood finite element spaces are $V_P=V\cap (S_P^d)^n$ and $Q_P = Q \cap C(\Omega) \cap S_P^{d-1}$,
and the {\em Galerkin approximation} $(u_P,p_P) \in V_P \times Q_P$ of $(u,p)$ from $V_P \times Q_P$ is characterized by
\begin{equation}\label{e:Stokes-discrete}
a(u_P,v) - b(v,p_P) - b(u_P,q) = \langle f,v\rangle_{L^2},
\qquad \textrm{for all}\quad
(v,q) \in V_P \times Q_P.
\end{equation}
It is proved in \cite{Boff94,Boff97} that for any $d\geq2$ and for $n\in\{2,3\}$,
the pair $(V_P,Q_P)$ satisfies the stability property 
\begin{equation}\label{e:Brezzi-stability}
\|q\|_{Q} \leq C_s \sup_{v\in V_P} \frac{b(v,q)}{\|v\|_V},
\qquad 
q\in Q_P,
\end{equation}
with $C_s$ depending only on the initial partition $P_0$,
under the sufficient condition that $P_0$ contains at least three simplices and each simplex has at least one vertex in $\Omega$.
Earlier work on the stability of the Taylor-Hood elements include \cite{BercPiro79,Verf84,Sten87,BrezFort91,BrezFalk91}.

Throughout this paper, we are going to assume that the stability \eqref{e:Brezzi-stability} holds with $C_s$ depending only on $P_0$.
This assumption implies the well-posedness of the discrete problem \eqref{e:Stokes-discrete}, as well as the {\em a priori} error estimate
\begin{equation}\label{e:a-priori-error}
\|(u-u_P,p-p_P)\|_{V\times Q} \leq C_s' \inf_{(v,q)\in V_P\times Q_P} \|(u-v,p-q)\|_{V\times Q},
\end{equation}
where the constant $C_s'$ depends only on $C_s$ and geometry of the domain $\Omega$.
Moreover, for any $(w,r)\in V_P\times Q_P$ we have
\begin{equation}\label{e:Babuska-stability}
\|(w,r)\|_{V\times Q} \leq C_s' \sup_{(v,q)\in V_P\times Q_P} \frac{a(w,v)-b(w,q)-b(v,r)}{\|(v,q)\|_{V\times Q}}.
\end{equation}

We close this section by remarking that the Galerkin problem \eqref{e:Stokes-discrete} can be given a convenient operator formulation.
Let $X_P=V_P\times Q_P$, and let $j_P:X_P\to X$ be the natural injection.
Then \eqref{e:Stokes-discrete} is simply
\begin{equation}\label{e:Stokes-discrete-op}
j_P'Aj_P(u_P,p_P) = j_P'F ,
\end{equation}
where $j_P':X'\to X_P$ is the dual of $j_P$,
and $A$ and $F$ are defined earlier, namely in \eqref{e:Stokes-operator} and in \eqref{e:Stokes-rhs-op}, respectively.
The significance of the stability assumption in this formulation is that \eqref{e:Babuska-stability} gives not only the invertibility of 
the operator $A_P=j_P'Aj_P:X_P\to X_P$, but also it implies the bound $\|A_P^{-1}\|\leq C_s'$.

\section{{\em A posteriori} error estimators}
\label{s:error-estimators}

For $P\in\conf(P_0)$, let us denote by $E_P$ the set of interior edges (or faces if $n=3$) in the partition $P$.
We also let $h_\tau = |\tau|^{1/n}$ and $h_e = |e|^{1/(n-1)}$,
where $|\tau|$ and $|e|$ are the $n$ and $(n-1)$ dimensional volumes, respectively, of $\tau\in P$ and $e\in E_P$.
Moreover, for $Q\subset P$, let $E_Q$ denote the set of interior edges of $Q$, i.e., 
the set of edges that are adjacent to two triangles from $Q$.
Then for $Q\subset P$, we introduce the residual based {\em a posteriori} error estimators
\begin{equation}\label{e:eta-0}
\eta_0(P,Q)
= \sum_{\tau\in Q} h_\tau^2 \|f+\Delta u_P - \nabla p_P\|_{L^2(\tau)}^2
+ \sum_{e\in E_Q} h_e \| [ \partial_\nu u_P ] \|_{L^2(e)}^2 ,
\end{equation}
\begin{equation}\label{e:eta-1}
\eta_1(P,Q)
= \eta_0(P,Q)
+ \sum_{\tau\in Q} \|\nabla\cdot u_P\|_{L^2(\tau)}^2,
\end{equation}
and
\begin{equation}\label{e:eta-2}
\eta_2(P,Q)
= \eta_0(P,Q)
+ \sum_{\tau\in Q} h_\tau \|\nabla\cdot u_P |_{\tau} \|_{L^2(\partial\tau)}^2 ,
\end{equation}
where $[\partial_\nu u_P]$ is the jump in the normal derivative of $u_P$ across the edge $e$.
It is understood that the differential operators $\Delta$ and $\partial_\nu$ act on vector functions such as $u_P$ component-wise.
The estimator $\eta_1$ was introduced in \cite{Verf89}, and the estimator $\eta_0$ has been proposed in \cite{MSV08}, as a variation on $\eta_1$.
The estimator $\eta_2$ seems to be new.

As shown in \cite{Verf89}, we have the equivalence
\begin{equation}\label{e:eta-1-equivalence}
\|u-u_P\|_V^2 + \|p-p_P\|_Q^2 
\lesssim
\eta(P,P)
\lesssim \|u-u_P\|_V^2 + \|p-p_P\|_Q^2 + \osc(P),
\end{equation}
for the error estimator $\eta = \eta_1$,
where the {\em oscillation} is defined as
\begin{equation}\label{e:oscillation}
\osc(P) = \min_{g\in (S^{d-2}_P)^n} \sum_{\tau\in P} h_\tau^2 \|f-g\|_{L^2(\tau)}^2.
\end{equation}
Here and in what follows, we shall often dispense with giving explicit
names to constants, and use the Vinogradov-style notation $X\lesssim
Y$, which means $X\leq C\cdot Y$ with some constant $C$ that is allowed to
depend only on $P_0$ and (the geometry of) the domain $\Omega$.
Moreover, even when we give names to constants, we will not explicitly mention that
the constants may depend on $P_0$ and $\Omega$, and this dependence will always be implicitly assumed.

The equivalence \eqref{e:eta-1-equivalence} also holds for $\eta=\eta_0$, because we have 
\begin{equation}\label{e:div-u-bound-edge-res}
\|\nabla\cdot u_P\|_{L^2(\Omega)}^2
\lesssim 
\sum_{e\in E_P} h_e \| [ \partial_\nu u_P ] \|_{L^2(e)}^2 ,
\end{equation}
and hence
\begin{equation}\label{e:global-equivalence-01}
\eta_0(P,P) \leq \eta_1(P,P) \lesssim \eta_0(P,P) ,
\end{equation}
see \cite[\S3.3]{MSV08} and \cite*[Proposition 5.4]{BMN02}.
Now that we have \eqref{e:eta-1-equivalence} for both $\eta_0$ and $\eta_1$,
we get it also for $\eta_2$, because
\begin{equation}\label{e:global-equivalence-021}
\eta_0(P,P) \leq \eta_2(P,P) \lesssim \eta_1(P,P) ,
\end{equation}
where the second inequality follows from
$\|\nabla\cdot u_P|_\tau\|_{L^2(\partial\tau)}\lesssim h_\tau^{-1/2}\|\nabla\cdot u_P\|_{L^2(\tau)}$.
To reiterate, the global equivalence \eqref{e:eta-1-equivalence} holds for all three estimators $\eta_0$, $\eta_1$, and $\eta_2$.

A convenient fact is that each estimator dominates the oscillation, in the sense that 
\begin{equation}\label{e:estimator-dominates-oscillation}
\osc(P) \leq \eta_0(P,P) , 
\qquad\textrm{for}\quad
P\in\conf(P_0).
\end{equation}
This is immediate because in \eqref{e:eta-0} we have $\Delta u_P - \nabla p_P\in (S^{d-2}_P)^n$,
while \eqref{e:oscillation} involves a minimization over the space $(S^{d-2}_P)^n$.


By standard arguments, we easily get local discrete upper bounds for $\eta_1$ and $\eta_2$, which we record in the next lemma.
In the statement of the lemma, note that $P\setminus P'$ is the set of triangles in $P$ that are refined as one goes from $P$ to $P'$.

\begin{lemma}\label{l:local-upper-bound}
For $P,P'\in\conf(P_0)$ with $P\preceq P'$, we have the local discrete upper bound
\begin{equation}\label{e:local-upper-bound}
\|u_{P'}-u_P\|_V^2 + \|p_{P'}-p_P\|_Q^2 \lesssim \eta_1(P,P\setminus P').
\end{equation}
Moreover, we have the local equivalence
\begin{equation}\label{e:local-equivalence}
\alpha \eta_1(P,Q) \leq \eta_2(P,Q) \leq \beta \eta_1(P,Q),
\qquad
Q\subset P,
\end{equation}
where $\alpha>0$ and $\beta$ are constants.
\end{lemma}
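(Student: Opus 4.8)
\textbf{Proof plan for Lemma \ref{l:local-upper-bound}.}

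The plan is to treat the two assertions separately, since the second (the local equivalence of $\eta_1$ and $\eta_2$) is essentially a local version of the chain of estimates \eqref{e:global-equivalence-021} already recorded, while the first (the local discrete upper bound) is the substantive claim and follows the now-standard route of combining the stability bound \eqref{e:Babuska-stability} with a residual representation and a Scott--Zhang type quasi-interpolation argument. For the second assertion, I would simply observe that on each triangle $\tau\in P$ the two estimators differ only in the divergence terms, and the estimate $\|\nabla\cdot u_P|_\tau\|_{L^2(\partial\tau)}\lesssim h_\tau^{-1/2}\|\nabla\cdot u_P\|_{L^2(\tau)}$ (an inverse trace inequality, valid since $\nabla\cdot u_P$ is a polynomial on a shape-regular simplex) gives $\eta_2(P,Q)\leq\beta\eta_1(P,Q)$ for $Q\subset P$ after summing over $\tau\in Q$; conversely, since $h_\tau\|\nabla\cdot u_P|_\tau\|_{L^2(\partial\tau)}^2\geq 0$ and the volumetric divergence term appearing in $\eta_1$ can be bounded by the edge-residual terms already contained in $\eta_0(P,Q)$ via a local analogue of \eqref{e:div-u-bound-edge-res} (this is where the constant $\alpha$ may be small, as it accounts for the passage through $\eta_0$), we obtain $\alpha\eta_1(P,Q)\leq\eta_2(P,Q)$. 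I would cite the relevant local bound from \cite{MSV08} or \cite{BMN02} for the divergence-versus-edge-jump comparison rather than re-derive it.

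For the local discrete upper bound \eqref{e:local-upper-bound}, set $(w,r)=(u_{P'}-u_P,p_{P'}-p_P)\in V_{P'}\times Q_{P'}=X_{P'}$. Apply the discrete inf-sup bound \eqref{e:Babuska-stability} on the finer space $X_{P'}$: there exists $(v,q)\in X_{P'}$ with $\|(v,q)\|_{V\times Q}=1$ such that $\|(w,r)\|_{V\times Q}\lesssim a(w,v)-b(w,q)-b(v,r)$. Using the two Galerkin identities \eqref{e:Stokes-discrete} for $P$ and for $P'$, the right-hand side becomes the difference of residuals tested against $(v,q)$, which by the standard elementwise integration-by-parts identity equals
\begin{equation*}
\sum_{\tau} \langle f+\Delta u_P-\nabla p_P,\,v\rangle_{L^2(\tau)} + \sum_{e} \langle [\partial_\nu u_P],\,v\rangle_{L^2(e)} - \langle q,\nabla\cdot u_P\rangle_{L^2(\Omega)},
\end{equation*}
where — and this is the crucial localization — the volume and jump terms are supported only on the region affected by refinement, i.e., on $\tau\in P\setminus P'$ and on edges $e$ adjacent to such $\tau$. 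The reason is Galerkin orthogonality: on any $\tau$ that is not refined, the test function $v$ can be taken (after subtracting a Scott--Zhang or Clément quasi-interpolant $v_h\in V_P$) to have vanishing contribution, since $(v-v_h,q-q_h)\in X_P$ tests the coarse residual to zero. Then one estimates each surviving term by Cauchy--Schwarz together with the local approximation and inverse-trace properties of the quasi-interpolant, $\|v-v_h\|_{L^2(\tau)}\lesssim h_\tau\|v\|_{H^1(\omega_\tau)}$ and $\|v-v_h\|_{L^2(e)}\lesssim h_e^{1/2}\|v\|_{H^1(\omega_e)}$, and for the divergence term simply $\|q\|_{L^2(\tau)}\|\nabla\cdot u_P\|_{L^2(\tau)}$; finite overlap of the patches $\omega_\tau,\omega_e$ then yields the bound by $\eta_1(P,P\setminus P')^{1/2}\cdot\|(v,q)\|_{V\times Q}$, and dividing through finishes it.

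The main obstacle is the localization step: one must be careful that the quasi-interpolant $v_h$ of $v\in V_{P'}$ can be chosen in the \emph{coarse} space $V_P$ while still being a good approximation of $v$ on triangles away from $P\setminus P'$, and that the resulting coarse test pair $(v_h,q_h)$ is admissible for the coarse Galerkin identity — this requires that the quasi-interpolant preserves boundary conditions and that one handles the pressure component $q$ without needing an analogous coarse interpolant (which is why the divergence term is simply estimated by its $L^2$ norm on $P\setminus P'$, without a patch). A secondary technical point is ensuring the patches $\omega_\tau$ associated to refined triangles, and the edges $e\in E_P$ that can carry a nonzero jump $[\partial_\nu u_P]$, are indeed all contained in (a controlled neighborhood of) $P\setminus P'$; this uses the gradedness \eqref{e:K-mesh} and local finiteness to keep the overlap constant bounded by $P_0$-dependent quantities only. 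Everything else is routine and parallels the Poisson case in \cite{CKNS08,Stev07}.
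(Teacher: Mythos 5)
Your plan for the discrete upper bound \eqref{e:local-upper-bound} follows the same route as the paper (stability \eqref{e:Babuska-stability} on $X_{P'}$, residual representation, localization via a coarse Scott--Zhang interpolant), but it contains one concrete error: you assert that the pressure component can be handled \emph{without} a coarse interpolant and that the term $\langle q,\nabla\cdot u_P\rangle_{L^2(\Omega)}$ can ``simply'' be estimated by $\sum_{\tau\in P\setminus P'}\|q\|_{L^2(\tau)}\|\nabla\cdot u_P\|_{L^2(\tau)}$. That step fails. One has $-b(u_{P'}-u_P,q)=b(u_P,q)=\int_\Omega q\,\nabla\cdot u_P$, and although $\nabla\cdot u_P\perp_{L^2} Q_P$, the test function $q\in Q_{P'}$ is not in $Q_P$, so the contributions from unrefined triangles do not vanish and the integral does not localize to $P\setminus P'$ by itself. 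You must subtract a coarse pressure interpolant $q_P\in Q_P$ chosen to coincide with $q$ on $\Omega\setminus\omega$ (where $\omega$ is the union of refined triangles) --- exactly as for the velocity --- and then estimate $\int_\omega (q-q_P)\nabla\cdot u_P$. This is what the paper does, taking the Scott--Zhang operator adapted to $\partial\omega$ for the \emph{pair} $(v,q)$. With that correction the rest of your argument for \eqref{e:local-upper-bound} is fine.

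The more serious gap is in your proof of the lower bound $\alpha\,\eta_1(P,Q)\leq\eta_2(P,Q)$. You propose to bound $\sum_{\tau\in Q}\|\nabla\cdot u_P\|_{L^2(\tau)}^2$ by the terms already in $\eta_0(P,Q)$ via ``a local analogue of \eqref{e:div-u-bound-edge-res}'', citing \cite{MSV08,BMN02}. No such local analogue is available: \eqref{e:div-u-bound-edge-res} is a \emph{global} statement, and the failure of its localization is precisely the ``nonlocal character'' of $\eta_0$ that the paper points to as the reason it cannot prove optimal rates for $\eta_0$ --- and the reason $\eta_2$ is introduced at all. If the divergence could be locally controlled by the jump terms of $\partial_\nu u_P$ already present in $\eta_0(P,Q)$, the extra term in $\eta_2$ would be superfluous. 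The correct local argument is different in kind: one uses that $\nabla\cdot u_P$ is $L^2(\omega)$-orthogonal to $H^1_0(\omega)\cap S_P^{d-1}$ on $\omega=\intr\bigcup_{\tau\in Q}\overline\tau$, and then a finite-dimensional norm equivalence (plus scaling) on the orthogonal complement gives
\begin{equation*}
\|\nabla\cdot u_P\|_{L^2(\omega)}^2
\lesssim \sum_{e\in E_Q} h_e \| [\nabla\cdot u_P] \|_{L^2(e)}^2
+ \sum_{\{e\in E_P:\,e\subset\partial\omega\}} h_e \| \nabla\cdot u_P \|_{L^2(e)}^2 ,
\end{equation*}
where the right-hand side involves jumps and boundary traces of $\nabla\cdot u_P$ itself, i.e.\ exactly the element-boundary terms $\sum_{\tau\in Q} h_\tau\|\nabla\cdot u_P|_\tau\|_{L^2(\partial\tau)}^2$ that distinguish $\eta_2$ from $\eta_0$ and $\eta_1$. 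Note in particular that the traces on $\partial\omega$ are indispensable (the norm equivalence needs them to force $g=0$), and they are present in $\eta_2(P,Q)$ but absent from $\eta_0(P,Q)$; this is why your route through $\eta_0$ cannot work. The second inequality $\eta_2(P,Q)\leq\beta\,\eta_1(P,Q)$ via the inverse trace estimate is correct as you state it.
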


\begin{proof}
For any $(v,q)\in(V_{P'},Q_{P'})$ and $(v_P,q_P)\in(V_{P},Q_{P})$, 
the equation \eqref{e:Stokes-discrete} and integration by parts give
\begin{multline}
a(u_{P'}-u_P,v) - b(v,p_{P'}-p_P) - b(u_{P'}-u_P,q) \\
=
a(u_{P'}-u_P,v-v_P) - b(v-v_P,p_{P'}-p_P) - b(u_{P'}-u_P,q-q_P) \\
=
\langle f,v-v_P\rangle - a(u_P,v-v_P) + b(v-v_P,p_P) + b(u_P,q-q_P) \\
=
\langle f,v-v_P\rangle
+ \sum_{\tau\in P} \big( \int_\tau \Delta u \cdot (v-v_P) - \int_{\partial\tau} \partial_\nu u_P \cdot (v-v_P) \big) \\
- \int_{\Omega} (v-v_P) \cdot \nabla p_P
+ \int_{\Omega} (q-q_P) \nabla\cdot u_P .
\end{multline}
Let $\omega = \intr\bigcup_{\tau\in P\setminus P'}\overline\tau$,
i.e., let $\omega$ be the interior of the region covered by the refined triangles.
Then we set $(v_P,q_P)$ to be equal to $(v,q)$ in $\Omega\setminus\omega$,
and equal to the Scott-Zhang interpolator of $(v,q)$ in $\omega$.
In doing so, we choose the Scott-Zhang interpolator to be adapted to the boundary of $\omega$,
thus ensuring that $v_P\in V_P$ and $q_P\in Q_P$, cf. \cite{SZ90}.
With this preparation, we can continue the chain of reasoning as follows.
\begin{multline}
a(u_{P'}-u_P,v) - b(v,p_{P'}-p_P) - b(u_{P'}-u_P,q) \\
=
\sum_{\tau\in P} \int_\tau (f+\Delta u - \nabla p_P ) \cdot (v-v_P) 
- \sum_{e\in E_P} \int_{e} [\partial_\nu u_P] \cdot (v-v_P)
+ \int_{\Omega} (q-q_P) \nabla \cdot u_P \\
\leq 
\sum_{\tau\in P\setminus P'} \|f+\Delta u - \nabla p_P\|_{L^2(\tau)} \|v-v_P\|_{L^2(\tau)} \\
+ \sum_{e\in E_P\setminus E_{P'}} \| [\partial_\nu u_P] \|_{L^2(\tau)} \|v-v_P\|_{L^2(\tau)}
+ \sum_{\tau\in P\setminus P'} \|\nabla \cdot u_P\|_{L^2(\tau)} \|q-q_P\|_{L^2(\tau)} .
\end{multline}
Since $(u_{P'}-u_P,p_{P'}-p_P)\in(V_{P'},Q_{P'})$,
we use the stability \eqref{e:Babuska-stability}, 
in combination with standard estimates for the Scott-Zhang interpolator and local finiteness,
to establish \eqref{e:local-upper-bound}.

The second inequality in \eqref{e:local-equivalence}, namely $\eta_2(P,Q)\lesssim\eta_1(P,Q)$, follows from 
the inverse estimate $\|\nabla\cdot u_P|_\tau\|_{L^2(\partial\tau)}\lesssim h_\tau^{-1/2}\|\nabla\cdot u_P\|_{L^2(\tau)}$.
In order to prove the other inequality $\eta_1(P,Q)\lesssim\eta_2(P,Q)$, we localize the argument from \cite{BMN02,MSV08}.
By putting $v=0$ in \eqref{e:Stokes-discrete}, we see that $\nabla\cdot u_P$ is $L^2$-orthogonal to the pressure space $Q_P$.
Since $\int_\Omega\nabla\cdot u_P=0$ by the divergence theorem, this means that $\nabla\cdot u_P$ is $L^2$-orthogonal to the full space $C(\Omega)\cap S_P^{d-1}$.
In particular, 
$(\nabla\cdot u_P)|_{\omega}$ is $L^2$-orthogonal to $H^1_0(\omega)\cap S_P^{d-1}$ on $\omega$, 
where $\omega = \intr\bigcup_{\tau\in Q}\overline\tau$,
meaning that
\begin{equation}
\|\nabla\cdot u_P\|_{L^2(\omega)} \leq \|\nabla\cdot u_P - q\|_{L^2(\omega)} ,
\end{equation}
for any $q\in H^1_0(\omega)\cap S_P^{d-1}$.
Let $S^* = \{g\in L^2(\omega)\cap S_P^{d-1} : g \perp_{L^2(\omega)} H^1_0(\omega)\cap S_P^{d-1}\}$.
Then we claim that
\begin{equation}
\|g\|_{L^2(\omega)}^2
\lesssim 
\sum_{e\in E_Q} h_e \| [g] \|_{L^2(e)}^2 
+
\sum_{\{e\in E_P:e\subset\partial\omega\}} h_e \| g \|_{L^2(e)}^2 ,
\end{equation}
for any $g\in S^*$, where $[g]$ denotes the jump of $g$ across $e$.
Indeed, the right hand side defines a (squared) norm on $S^*$,
since the vanishing of this quantity implies that $g\in C(\omega)\cap S^{d-1}_P$ and $g|_{\partial\omega}=0$,
which means that $g=0$ by $g \perp_{L^2(\omega)} H^1_0(\omega)\cap S_P^{d-1}$.
The local scaling by $h_e$ can be deduced by a local homogeneity argument.
Finally, plugging in $g=\nabla\cdot u_P$ and using straightforward bounds, 
the proof is completed.
\end{proof}

We end this section with the following standard result.

\begin{lemma}\label{l:est-red}
\begin{enumerate}[a)]
\item
Let $P,P'\in\conf(P_0)$ be such that $P\preceq P'$,
and let
\begin{equation}\label{e:Dorfler-1}
\eta_2(P,P\setminus P') \geq \theta \eta_2(P,P) ,
\end{equation}
for some $0<\theta\leq1$.
Then we have
\begin{equation}\label{e:est-red}
\eta_2(P',P')
\leq 
\mu\eta_2(P,P)
+ \gamma \| ( u_P - u_{P'}, p_P - p_{P'} ) \|_{V\times Q}^2 ,
\end{equation}
with $\mu<1$ and $\gamma$ depending only on $\theta$.
\item
Let $P,P'\in\conf(P_0)$ be such that $P\preceq P'$,
and for some $0<\mu<\frac12$ suppose that 
\begin{equation}\label{e:est-red-1}
\eta_2(P',P') \leq \mu \eta_2(P,P) .
\end{equation}
Then we have
\begin{equation}
\eta_2(P,P\setminus P') \geq \theta^*(1-2\mu) \eta_2(P,P) ,
\end{equation}
where $\theta^*>0$ is a constant independent of $\mu$.
\end{enumerate}
\end{lemma}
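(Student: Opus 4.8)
The plan is to prove the two parts of Lemma~\ref{l:est-red} by the now-standard ``estimator reduction'' argument of Cascón--Kreuzer--Nochetto--Siebert, transplanted from the Poisson setting, with the local equivalence $\eta_2\simeq\eta_1$ from Lemma~\ref{l:local-upper-bound} available whenever it is convenient to switch between the two estimators. Throughout write $e_{P,P'}=\|(u_P-u_{P'},p_P-p_{P'})\|_{V\times Q}$.

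For part~(a), the first step is to record the \emph{generalized triangle inequality} for the estimator $\eta_2$: for any $\delta>0$ and any $P\preceq P'$,
\begin{equation}
\eta_2(P',P') \leq (1+\delta)\,\eta_2(P,P') + C_\delta\, e_{P,P'}^2 ,
\end{equation}
where $\eta_2(P,P')$ denotes the estimator associated with the \emph{coarse} solution $(u_P,p_P)$ but summed over the \emph{fine} mesh $P'$. This follows elementwise: on each $\tau'\in P'$ expand $f+\Delta u_{P'}-\nabla p_{P'} = (f+\Delta u_P-\nabla p_P) + (\Delta(u_{P'}-u_P)-\nabla(p_{P'}-p_P))$, similarly for the jump and divergence terms, apply Young's inequality with parameter $\delta$, and bound the difference contributions by $e_{P,P'}^2$ using inverse estimates on $P'$ (here $h_{\tau'}^2\|\Delta w\|_{L^2(\tau')}^2\lesssim\|\nabla w\|_{L^2(\tau')}^2$ and the analogous edge and divergence bounds), finally summing over $\tau'$ and using local finiteness. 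The second step uses the mesh refinement: every triangle of $P$ that is bisected has its diameter reduced by a fixed factor, so on the refined region the weights $h_{\tau'}^2$ and $h_{e'}$ shrink by a factor $\rho<1$ (depending only on $\shape$ and $n$), whence
\begin{equation}
\eta_2(P,P') \leq \eta_2(P,P) - (1-\rho)\,\eta_2(P,P\setminus P') .
\end{equation}
Inserting the D\"orfler marking hypothesis \eqref{e:Dorfler-1} gives $\eta_2(P,P')\leq(1-(1-\rho)\theta)\,\eta_2(P,P)$; combining with the first step and choosing $\delta$ small enough that $\mu:=(1+\delta)(1-(1-\rho)\theta)<1$ yields \eqref{e:est-red} with $\gamma=C_\delta$.

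For part~(b), I would argue by contraposition. Suppose $\eta_2(P,P\setminus P') < \theta^*(1-2\mu)\,\eta_2(P,P)$ for a constant $\theta^*$ to be fixed; the goal is to derive $\eta_2(P',P')>\mu\,\eta_2(P,P)$. Split $\eta_2(P',P')$ over $P'$ into the part supported on the \emph{unrefined} triangles $P\cap P'$ and the rest. On $P\cap P'$ the coarse and fine solutions share the same mesh weights, so a reverse generalized triangle inequality gives $\eta_2(P',P'\cap P)\geq \tfrac12\eta_2(P,P\cap P) - C\,e_{P,P'}^2$, and $\eta_2(P,P\cap P) = \eta_2(P,P) - \eta_2(P,P\setminus P') > (1-\theta^*(1-2\mu))\,\eta_2(P,P)$ by hypothesis. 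The error difference $e_{P,P'}^2$ is controlled by the local upper bound \eqref{e:local-upper-bound} of Lemma~\ref{l:local-upper-bound} together with the equivalence $\eta_1\simeq\eta_2$, namely $e_{P,P'}^2\lesssim\eta_1(P,P\setminus P')\lesssim\eta_2(P,P\setminus P')<\theta^*(1-2\mu)\,\eta_2(P,P)$. Assembling these and discarding the nonnegative contribution from the refined part, $\eta_2(P',P')\geq\big(\tfrac12(1-\theta^*(1-2\mu)) - C\theta^*(1-2\mu)\big)\eta_2(P,P)$; choosing $\theta^*$ small (independent of $\mu$, using $1-2\mu<1$) makes the bracket at least $\tfrac12 - C'\theta^* \geq \mu$ when $\mu<\tfrac12$, contradicting \eqref{e:est-red-1}. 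Cleaning up the constant bookkeeping so that $\theta^*$ comes out genuinely independent of $\mu$ is the one place requiring care, and is the main (though routine) obstacle; everything else is mechanical once the elementwise perturbation and inverse estimates are in place.
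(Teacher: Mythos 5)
Your argument is correct and follows essentially the same route as the paper: part (a) is the standard Casc\'on--Kreuzer--Nochetto--Siebert estimator-reduction argument, which the paper compresses into the single ``standard'' inequality $\eta_2(P',P')\le(1+\delta)\eta_2(P,P)-\lambda(1+\delta)\eta_2(P,P\setminus P')+C_\delta\|(u_P-u_{P'},p_P-p_{P'})\|_{V\times Q}^2$, and part (b) uses exactly the paper's three ingredients --- the perturbation bound relating $\eta_2(P,\cdot)$ and $\eta_2(P',\cdot)$ on the unrefined region $P\cap P'$, the splitting of $\eta_2(P,P)$ over $P\setminus P'$ and $P\cap P'$, and the local discrete upper bound of Lemma \ref{l:local-upper-bound} to absorb the solution difference into $\eta_2(P,P\setminus P')$ --- the only cosmetic difference being that the paper assembles them directly into $(1-2\mu)\eta_2(P,P)\lesssim\eta_2(P,P\setminus P')$ rather than arguing by contraposition. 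The one point you flag as delicate, the independence of $\theta^*$ from $\mu$, is automatic because the factor $(1-2\mu)$ cancels from both sides of your final inequality, which is precisely why the paper's direct formulation yields $\theta^*(1-2\mu)$ with $\theta^*$ a fixed constant.
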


\begin{proof}
a)
By standard arguments, one can prove that
\begin{equation}
\eta_2(P',P')
\leq 
(1+\delta)\eta_2(P,P)
- \lambda(1+\delta) \eta_2(P,P\setminus P')
+ C_\delta \| ( u_P - u_{P'}, p_P - p_{P'} ) \|_{V\times Q}^2,
\end{equation}
for any $P,P'\in\conf(P_0)$ with $P\preceq P'$,
and for any $\delta>0$, 
where $C_\delta$ may depend on $\delta$,
and $\lambda>0$ is independent of $\delta$.
Upon using \eqref{e:Dorfler-1}, this gives
\begin{equation}
\eta_2(P',P')
\leq 
(1+\delta) (1-\lambda\theta) \eta_2(P,P)
+ C_\delta \| ( u_P - u_{P'}, p_P - p_{P'} ) \|_{V\times Q}^2,
\end{equation}
and choosing $\delta>0$ small enough we get \eqref{e:est-red}.

b)
Observe that
\begin{multline}
\eta_2(P,P\cap P')
\leq 
2\eta_2(P',P\cap P')
+
\sum_{\tau\in P\cap P'} 2 h_\tau^2 \|\Delta (u_P-u_{P'}) - \nabla (p_P-p_{P'})\|_{L^2(\tau)}^2 \\
+ \sum_{e\in E_P\cap E_{P'}} 2 h_e \| [ \partial_\nu (u_P-u_{P'}) ] \|_{L^2(e)}^2
+ \sum_{\tau\in P\cap P'} 2 h_\tau \|\nabla\cdot (u_P-u_{P'}) |_{\tau} \|_{L^2(\partial\tau)}^2 \\
\leq
2\eta_2(P',P\cap P')
+ C \| ( u_P - u_{P'}, p_P - p_{P'} ) \|_{V\times Q}^2.
\end{multline}
Using this observation, the property \eqref{e:est-red-1}, and the local upper bound from Lemma \ref{l:local-upper-bound}, we infer
\begin{equation}
\begin{split}
(1-2\mu)\eta_2(P,P)
&\leq 
\eta_2(P,P\setminus P') + \eta_2(P,P\cap P') - 2 \eta_2(P',P') \\
&\leq 
\eta_2(P,P\setminus P') 
+ C \| ( u_P - u_{P'}, p_P - p_{P'} ) \|_{V\times Q}^2 \\
&\lesssim 
\eta_2(P,P\setminus P'),
\end{split}
\end{equation}
which concludes the proof.
\end{proof}

\section{Convergence rates}
\label{s:convergence}

We are ready to start our discussion of adaptive algorithms and their convergence rates.
A template of an adaptive finite element method is displayed as Algorithm \ref{a:afem}.
The {\em a posteriori} error estimator $\eta$ can be chosen to be $\eta_0$ from \eqref{e:eta-0},
$\eta_1$ from \eqref{e:eta-1}, or $\eta_2$ from \eqref{e:eta-2}.
For theoretical purposes, we think of the algorithm as generating an infinite sequence of triples
$\{(P_k,u_k,p_k)\}$, where $P_k\in\conf(P_0)$ and $(u_k,p_k)\in V_{P_k}\times Q_{P_k}$ for all $k\in\N_0$.

\vspace{2mm}
\begin{algorithm}[H]
\SetKwInOut{Params}{input}
\SetKwInOut{Output}{output}
\SetKwFor{For}{for}{do}{endfor}
\Params{conforming partition $P_0$, and $0<\theta\leq1$}
\Output{$P_k\in\conf(P_0)$ and $(u_k,p_k)\in V_{P_k}\times Q_{P_k}$ for all $k\in\N_0$}
\BlankLine
\For{$k=0,1,\ldots$}{
Compute $(u_k,p_k)\in V_{P_k}\times Q_{P_k}$ as the Galerkin approximation of $(u,p)$\;\nllabel{a:galsolve-pos}
Identify a minimal (up to a constant factor) set $R_k\subset P_k$ of triangles satisfying
\begin{equation}\label{e:Dorlfer-property}
\eta(P_k,R_k) \geq \theta \eta(P_k,P_k);\;
\end{equation}
Set $P_{k+1}=\refine(P_k,R_k)$\;\nllabel{a:make-conf-pos}
}
\caption{Adaptive FEM}\label{a:afem}
\end{algorithm}
\vspace{2mm}

The first important question is if and how fast the approximations $(u_k,p_k)$ converge to the exact solution $(u,p)$ as $k\to\infty$.
The following plain convergence result was obtained in \cite[\S3.3]{MSV08} for $\eta=\eta_1$
and in \cite[\S4.4]{Sieb11} for $\eta=\eta_0$.
Following \cite*{AFLP10}, we give here a slightly different proof.

\begin{lemma}\label{l:conv-plain}
In the context of Algorithm \ref{a:afem},
let $\eta$ be one of $\eta_0$, $\eta_1$ and $\eta_2$.
Then we have $(u_k,p_k)\to(u,p)$ in $V\times Q$ as $k\to\infty$.
\end{lemma}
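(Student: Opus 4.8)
The plan is to follow the ``estimator reduction'' strategy of \cite{AFLP10}, splitting the argument into two movements that do not interact until the very end: first I would show, using only nestedness of the discrete spaces and the uniform inf-sup stability, that the Galerkin solutions converge; then I would show that the estimator itself tends to zero, which by reliability pins the limit down to $(u,p)$.

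For the first movement, put $X_k = V_{P_k}\times Q_{P_k}$. Since $P_k\preceq P_{k+1}$ these spaces are nested in $X = V\times Q$, so one may form $X_\infty=\overline{\bigcup_{k} X_k}$, a closed subspace of $X$; one checks that $X_\infty=V_\infty\times Q_\infty$ with $V_\infty=\overline{\bigcup_k V_{P_k}}$ and $Q_\infty=\overline{\bigcup_k Q_{P_k}}$, and that the inf-sup condition \eqref{e:Brezzi-stability} survives on $V_\infty\times Q_\infty$ with a constant depending only on $C_s$, using density of $\bigcup_k V_{P_k}$ and $\bigcup_k Q_{P_k}$ together with boundedness of $b$ on $V\times Q$. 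Hence the Galerkin problem for \eqref{e:Stokes-weak} posed over $X_\infty$ has a unique solution $(u_\infty,p_\infty)\in X_\infty$, and a quasi-optimality estimate of the form \eqref{e:a-priori-error} holds for all subspaces of $X_\infty$ with a uniform constant. Testing with $(v,q)\in X_k\subseteq X_\infty$ shows that $(u_k,p_k)$ is precisely the Galerkin approximation of $(u_\infty,p_\infty)$ from $X_k$, so
\begin{equation*}
\|(u_\infty-u_k,\,p_\infty-p_k)\|_{V\times Q}\;\lesssim\;\inf_{(v,q)\in X_k}\|(u_\infty-v,\,p_\infty-q)\|_{V\times Q}\;\longrightarrow\;0,
\end{equation*}
because $\bigcup_k X_k$ is dense in $X_\infty$ and $(u_\infty,p_\infty)\in X_\infty$. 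In particular $(u_k,p_k)$ is Cauchy in $V\times Q$, whence $\|(u_k-u_{k+1},\,p_k-p_{k+1})\|_{V\times Q}\to 0$.

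For the second movement, note that $P_{k+1}=\refine(P_k,R_k)$ refines every triangle of $R_k$, so $R_k\subseteq P_k\setminus P_{k+1}$; combining the D\"orfler property \eqref{e:Dorlfer-property} for the chosen $\eta\in\{\eta_0,\eta_1,\eta_2\}$ with the local equivalence \eqref{e:local-equivalence} and the global equivalences \eqref{e:global-equivalence-01}--\eqref{e:global-equivalence-021} yields a constant $\theta^\star>0$, independent of $k$, with $\eta_2(P_k,P_k\setminus P_{k+1})\geq\theta^\star\eta_2(P_k,P_k)$. Lemma \ref{l:est-red}(a), applied with $P=P_k$ and $P'=P_{k+1}$, then provides $\mu<1$ and $\gamma$, both independent of $k$, such that
\begin{equation*}
\eta_2(P_{k+1},P_{k+1})\;\leq\;\mu\,\eta_2(P_k,P_k)\;+\;\gamma\,\|(u_k-u_{k+1},\,p_k-p_{k+1})\|_{V\times Q}^2 .
\end{equation*}
Since the perturbation tends to zero, the elementary fact that $a_{k+1}\leq\mu a_k+b_k$ with a fixed $\mu<1$ and $b_k\to 0$ forces $a_k\to 0$ gives $\eta_2(P_k,P_k)\to 0$, and then $\eta_0(P_k,P_k),\eta_1(P_k,P_k)\to 0$ by the global equivalences. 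Finally the reliability half of \eqref{e:eta-1-equivalence}, valid for each of the three estimators, yields $\|u-u_k\|_V^2+\|p-p_k\|_Q^2\lesssim\eta(P_k,P_k)\to 0$, i.e.\ $(u_k,p_k)\to(u,p)$ in $V\times Q$.

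The main obstacle, I expect, is the first movement: unlike the symmetric coercive setting there is no monotone quantity (such as an energy) decreasing along the iterations, so the smallness of the consecutive differences that drives the recursion must be harvested from the abstract Galerkin-limit argument, whose validity for this saddle-point problem hinges on the inf-sup stability \eqref{e:Brezzi-stability} persisting, uniformly, on the limiting space $X_\infty$. Everything after that is a routine assembly of the D\"orfler marking, the estimator equivalences, and Lemma \ref{l:est-red}(a).
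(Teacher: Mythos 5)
Your proposal is correct and follows essentially the same route as the paper: first establish that $(u_k,p_k)$ converges (hence consecutive differences vanish), then combine the D\"orfler marking, the local/global estimator equivalences, the estimator reduction of Lemma \ref{l:est-red}~a), the elementary recursion $a_{k+1}\leq\mu a_k+b_k$ with $b_k\to0$, and the reliability bound in \eqref{e:eta-1-equivalence}. The only difference is that you sketch the first step (convergence to a Galerkin limit $(u_\infty,p_\infty)$ on $X_\infty$, with the inf-sup condition passing to the limit) from first principles, whereas the paper simply cites \cite[Lemma~4.2]{MSV08}, which encapsulates exactly that argument.
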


\begin{proof}
By \cite[Lemma 4.2]{MSV08},
we have $(u_k,p_k)\to(u_\infty,p_\infty)$ in $V\times Q$ as $k\to\infty$,
for some $(u_\infty,p_\infty)\in V\times Q$.

On the other hand, since $\eta_0(P_k,R_k)\leq\eta_2(P_k,R_k)$ and $\eta_2(P_k,P_k)\lesssim\eta_0(P_k,P_k)$ from the global equivalences \eqref{e:global-equivalence-01} and \eqref{e:global-equivalence-021}, 
the D\"orfler property \eqref{e:Dorlfer-property} for $\eta=\eta_0$ implies the same for $\eta=\eta_2$, with possibly a different constant $\theta>0$.
Similarly, by the equivalence \eqref{e:local-equivalence}, the D\"orfler property \eqref{e:Dorlfer-property} for $\eta=\eta_1$ implies the same for $\eta=\eta_2$, 
with possibly a different constant $\theta>0$.
The latter argument runs also in the other direction, since \eqref{e:local-equivalence} is a local equivalence.
To conclude, we can assume the D\"orfler property \eqref{e:Dorlfer-property} for both $\eta=\eta_1$ and $\eta=\eta_2$, 
with possibly different constants $\theta>0$.

In any case, by Lemma \ref{l:est-red} a), there exist constants $\mu<1$ and $\gamma\geq0$ such that
\begin{equation}
\eta_2(P_{k+1},P_{k+1})
\leq 
\mu\eta_2(P_{k},P_{k})
+ \gamma \| ( u_k - u_{k+1}, p_k - p_{k+1} ) \|_{V\times Q}^2 ,
\end{equation}
for all $k\in\N$.
The last term converges to $0$ as $k\to\infty$, since $(u_k,p_k)$ is convergent.
Thus introducing the abbreviation $e_k=\eta_2(P_{k},P_{k})$, we have
\begin{equation}
e_{k+1} \leq \mu e_k + \alpha_k ,
\end{equation}
with $\alpha_k\to0$.
Let $\eps>0$, and let $k$ be such that $\alpha_{k+m}\leq\eps$ for all $m\geq0$.
Then we have
\begin{equation}
e_{k+m} \leq \mu^m e_k + \eps (1+\mu+\ldots+\mu^{m-1}) \leq \mu^m e_k + \frac\eps{1-\mu} ,
\end{equation}
for all $m\geq0$.
This shows that $\limsup_{k\to\infty}e_k\leq\eps/(1-\mu)$.
Since $\eps>0$ is arbitrary, and $e_k\geq0$, we conclude that $\lim_{k\to\infty}e_k=0$.
Finally, the global upper bound in \eqref{e:eta-1-equivalence} implies the convergence
$(u_k,p_k)\to(u,p)$ in $V\times Q$ as $k\to\infty$.
\end{proof}

We have the following {\em conditional} error reduction theorem for all three estimators.

\begin{theorem}\label{t:conv}
In the context of Algorithm \ref{a:afem},
let $\eta$ be one of $\eta_0$, $\eta_1$ and $\eta_2$.
Assume that there exists a constant $c>0$ such that
\begin{equation}\label{e:general-quasi-orthogonality}
\sum_{k=\ell}^{N} \| ( u_k - u_{k+1}, p_k - p_{k+1} ) \|_{V\times Q}^2
\leq 
c \| ( u - u_{\ell}, p - p_{\ell} ) \|_{V\times Q}^2 ,
\end{equation}
for any integers $\ell$ and $N$.
Then there are constants $\rho<1$ and $C>0$ such that
\begin{equation}\label{e:eta-decay}
\eta(P_k,P_k) \leq C \rho^{k-\ell} \eta(P_\ell,P_\ell) ,
\end{equation}
for all $k\geq\ell\geq0$.
In particular, we have
\begin{equation}
\|(u-u_k,p-p_k)\|_{V\times Q} \leq C' \rho^{k},
\end{equation}
for some constant $C'$.
\end{theorem}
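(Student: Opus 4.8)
The plan is to reduce the statement to the estimator $\eta_2$, combine the estimator reduction of Lemma~\ref{l:est-red}~a) with the hypothesis \eqref{e:general-quasi-orthogonality} to control the tails of the sequence $e_k:=\eta_2(P_k,P_k)$ by its own terms, and then extract geometric decay from that summability.

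First I would observe, exactly as in the proof of Lemma~\ref{l:conv-plain}, that the D\"orfler property \eqref{e:Dorlfer-property} for any of $\eta_0$, $\eta_1$, $\eta_2$ implies the D\"orfler property for $\eta_2$ with a possibly different parameter, by the global equivalences \eqref{e:global-equivalence-01}, \eqref{e:global-equivalence-021} and the local equivalence \eqref{e:local-equivalence}. Hence Lemma~\ref{l:est-red}~a), applied with $P=P_k$ and $P'=P_{k+1}$, furnishes constants $\mu<1$ and $\gamma\geq0$ with
\[
  e_{k+1}\leq\mu e_k+\gamma d_k,\qquad d_k:=\|(u_k-u_{k+1},p_k-p_{k+1})\|_{V\times Q}^2 ,
\]
for all $k$. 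Since $\eta_2(P,P)$ is globally equivalent to both $\eta_1(P,P)$ and $\eta_0(P,P)$ by \eqref{e:global-equivalence-021} and \eqref{e:local-equivalence}, it suffices to establish \eqref{e:eta-decay} for $\eta=\eta_2$.

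Next I would bring in the hypothesis: letting $N\to\infty$ in \eqref{e:general-quasi-orthogonality} and using the first inequality in \eqref{e:eta-1-equivalence} (which holds for $\eta_2$, as recorded after \eqref{e:global-equivalence-021}),
\[
  \sum_{k=\ell}^{\infty}d_k\leq c\,\|(u-u_\ell,p-p_\ell)\|_{V\times Q}^2\lesssim e_\ell ,\qquad\text{for every }\ell .
\]
Summing $e_{k+1}\leq\mu e_k+\gamma d_k$ over $k=\ell,\ldots,m$, telescoping the left-hand side and discarding the nonnegative term $e_{m+1}$, and then letting $m\to\infty$, I obtain $(1-\mu)\sum_{k=\ell}^{\infty}e_k\leq e_\ell+\gamma\sum_{k=\ell}^{\infty}d_k\lesssim e_\ell$. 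Hence there is a constant $C_0\geq1$, independent of $\ell$, with $T_\ell:=\sum_{k=\ell}^{\infty}e_k\leq C_0\,e_\ell$; in particular each tail is finite.

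Finally, to convert this tail bound into geometric decay I would write $T_\ell=e_\ell+T_{\ell+1}\geq C_0^{-1}T_\ell+T_{\ell+1}$, which gives $T_{\ell+1}\leq(1-C_0^{-1})T_\ell$ and hence $T_k\leq(1-C_0^{-1})^{k-\ell}T_\ell$ for $k\geq\ell$. Since $e_k\leq T_k$ and $T_\ell\leq C_0 e_\ell$, this is precisely \eqref{e:eta-decay} for $\eta_2$ with $\rho=1-C_0^{-1}<1$, and therefore for any of the three estimators by global equivalence. The stated bound on $\|(u-u_k,p-p_k)\|_{V\times Q}$ then follows by applying the first inequality in \eqref{e:eta-1-equivalence} at level $k$, taking $\ell=0$ in \eqref{e:eta-decay}, and replacing $\rho$ by $\rho^{1/2}$. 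I expect the one genuinely delicate point to be the telescoping/summation step that turns the per-iteration estimator reduction, together with the quasi-orthogonality, into summability of $\{e_k\}$; the reduction to $\eta_2$ and the extraction of geometric decay are otherwise routine manipulations with the equivalences already in hand.
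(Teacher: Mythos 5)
Your proposal is correct and follows essentially the same route as the paper's proof: reduce to $\eta_2$ via the D\"orfler and estimator equivalences, sum the estimator reduction inequality of Lemma~\ref{l:est-red}~a) using the quasi-orthogonality hypothesis and the lower bound in \eqref{e:eta-1-equivalence} to bound the tails $\sum_{k\geq\ell}\eta_2(P_k,P_k)$ by $\eta_2(P_\ell,P_\ell)$, and then extract geometric decay of the tails. The paper performs exactly the same summation and tail-contraction argument, so there is nothing further to add.
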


\begin{proof}
As we have discussed in the proof of Lemma \ref{l:conv-plain},
we can assume the D\"orfler property \eqref{e:Dorlfer-property} for both $\eta=\eta_1$ and $\eta=\eta_2$, 
with possibly different constants $\theta>0$.
Hence by Lemma \ref{l:est-red} a), there exist constants $\mu<1$ and $\gamma\geq0$ such that
\begin{equation}
\eta_2(P_{k+1},P_{k+1})
\leq 
\mu\eta_2(P_{k},P_{k})
+ \gamma \| ( u_k - u_{k+1}, p_k - p_{k+1} ) \|_{V\times Q}^2 ,
\end{equation}
for all $k\in\N$.
Then invoking the assumption \eqref{e:general-quasi-orthogonality},
and the global upper bound from \eqref{e:eta-1-equivalence},
we get
\begin{equation}\label{e:summ-eta}
\begin{split}
\sum_{k=\ell}^{N} \eta_2(P_{k+1},P_{k+1})
&\leq 
\mu \sum_{k=\ell}^{N} \eta_2(P_{k},P_{k})
+ \gamma \sum_{k=\ell}^{N} \| ( u_k - u_{k+1}, p_k - p_{k+1} ) \|_{V\times Q}^2 \\
&\leq 
\mu \sum_{k=\ell}^{N} \eta_2(P_{k},P_{k})
+ \gamma c \| ( u - u_{\ell}, p - p_{\ell} ) \|_{V\times Q}^2 \\
&\leq 
\mu \sum_{k=\ell}^{N} \eta_2(P_{k},P_{k})
+ C \eta_2(P_{\ell},P_{\ell}) ,
\end{split}
\end{equation}
for any $\ell\in\N$ and $N\geq\ell$.
Since $\mu<1$, this implies the convergence of the series $\sum_k\eta_2(P_k,P_k)$, and thus
\begin{equation}\label{e:alpha-eta}
\alpha_{\ell} \leq \mu \alpha_\ell + (1+C) \eta_2(P_{\ell},P_{\ell}) ,
\qquad\textrm{or}\qquad
\alpha_{\ell} \leq \frac{1+C}{1-\mu} \eta_2(P_{\ell},P_{\ell}) ,
\end{equation}
where we have added $\eta_2(P_{\ell},P_{\ell})$ to the both sides of \eqref{e:summ-eta}, and introduced
\begin{equation}
\alpha_{\ell} = \sum_{k=\ell}^{\infty} \eta_2(P_{k},P_{k}) .
\end{equation}
As a consequence, we get
\begin{equation}
\alpha_{\ell+1} = \alpha_\ell - \eta_2(P_{\ell},P_{\ell}) \leq \frac{\mu+C}{1+C} \alpha_\ell ,
\end{equation}
which means that the quantity $\alpha_\ell$ decays geometrically.
This yields
\begin{equation}
\eta_2(P_k,P_k)\leq\alpha_k \leq \big( \frac{\mu+C}{1+C} \big)^{k-\ell} \frac{1+C}{1-\mu} \eta_2(P_{\ell},P_{\ell}) ,
\end{equation}
where we have taken into account \eqref{e:alpha-eta}.
Finally, the same geometric decay for both $\eta_0$ and $\eta_1$ follow from the equivalences \eqref{e:global-equivalence-01} and \eqref{e:global-equivalence-021}.
\end{proof}

Now we address the question of convergence rate.
We start by defining 
\begin{equation}
E_P(u,p) = \inf_{(v,q)\in V_P\times Q_P} \| (u-v,p-q) \|_{V\times Q} ,
\end{equation}
for $P\in\conf(P_0)$, and
\begin{equation}
\sigma^*_N(u,p) = \inf_{P\in\tstP_N} \big( E_P(u,p)^2 + \osc(P) \big)^{\frac12},
\end{equation}
for $N\in\N$, where $\tstP_N=\{P\in\conf(P_0):\#P-\#P_0\leq N\}$.
Note that the oscillation depends on $u$ and $p$ implicitly through the equation \eqref{e:Stokes-weak}.
Following \cite{CKNS08}, we then define the {\em modified approximation class}
\begin{equation}\label{e:approx-class-mod}
\tstA_*^s = \{(u,p)\in V\times Q : |(u,p)|_{\tstA_*^s} \equiv \sup_{N\in\N} N^s\sigma^*_N(u,p)<\infty\} ,
\end{equation}
for $s>0$.
Thus, $(u,p)\in\tstA_*^s$ if and only if for each $N\in\N$, there exists a partition $P\in\conf(P_0)$ with $\#P-\#P_0\leq N$, 
such that 
\begin{equation}
E_P(u,p)^2+\osc(P)
\leq c (\#P-\#P_0)^{-2s} ,
\end{equation}
where the constant $c=c(u,p)$ is independent of $P$.
The greatest lower bound for such constants $c$ is the quantity $|(u,p)|_{\tstA_*^s}^2$.

The following is one of our main results alluded to in the Introduction.

\begin{theorem}\label{t:rate}
In the context of Algorithm \ref{a:afem},
let $\eta$ be either $\eta_1$ or $\eta_2$, and let $\theta>0$ be small enough.
A sufficient condition is $\theta<\theta^*$ for $\eta=\eta_2$, and $\theta<\frac\alpha\beta\theta^*$ for $\eta=\eta_1$.
Suppose that $f\in L^2(\Omega,\R^n)$ and that $(u,p)\in\tstA_*^s$ for some $s>0$.
In addition, assume \eqref{e:general-quasi-orthogonality} for the solution sequence $\{(u_k,p_k)\}$.
Then there exists a constant $c>0$ such that
\begin{equation}\label{e:optimal-rate}
\| (u-u_k,p-p_k) \|_{V\times Q}^2+\osc(P_k)
\leq
c |(u,p)|_{\tstA_*^s}^2 (\#P_k-\#P_0)^{-2s} .
\end{equation}
\end{theorem}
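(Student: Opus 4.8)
## Proof plan

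The plan is to follow the now-standard optimality template of Cascon--Kreuzer--Nochetto--Siebert, adapted to the saddle-point setting and the conditional framework provided by Theorem \ref{t:conv}. The backbone of the argument is a \emph{D\"orfler-to-optimal-cardinality} comparison, which compares the mesh $P_k$ produced by the algorithm against a near-optimal mesh coming from the hypothesis $(u,p)\in\tstA_*^s$. Since the marking in Algorithm \ref{a:afem} uses the estimator $\eta$ (either $\eta_1$ or $\eta_2$, which are locally equivalent by \eqref{e:local-equivalence}, so it suffices to treat $\eta_2$), the key structural ingredients are: (i) the global two-sided estimate \eqref{e:eta-1-equivalence}, which ties $\eta_2(P,P)$ to the total error $\|(u-u_P,p-p_P)\|_{V\times Q}^2+\osc(P)$; (ii) the fact that the estimator dominates the oscillation, \eqref{e:estimator-dominates-oscillation}; (iii) the local upper bound \eqref{e:local-upper-bound}; and (iv) the minimality of $R_k$ in the marking step.

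The steps, in order, are as follows. \emph{Step 1 (total error equivalence and geometric reduction).} Using \eqref{e:eta-1-equivalence} together with Theorem \ref{t:conv}, under the assumed quasi-orthogonality \eqref{e:general-quasi-orthogonality} the total error $\zeta_k^2 := \|(u-u_k,p-p_k)\|_{V\times Q}^2+\osc(P_k)$ is equivalent to $\eta_2(P_k,P_k)$ and decays geometrically in $k$. \emph{Step 2 (optimal mesh from $\tstA_*^s$).} Fix $k$. Given a target tolerance $\varepsilon$, use $(u,p)\in\tstA_*^s$ to produce $Q_\varepsilon\in\conf(P_0)$ with $\#Q_\varepsilon-\#P_0\lesssim |(u,p)|_{\tstA_*^s}^{1/s}\varepsilon^{-1/s}$ and $E_{Q_\varepsilon}(u,p)^2+\osc(Q_\varepsilon)\le\varepsilon^2$. \emph{Step 3 (overlay and D\"orfler).} Form the overlay $P_k\oplus Q_\varepsilon$, whose cardinality overhead over $P_k$ is controlled by \eqref{e:overlay}. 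Choosing $\varepsilon$ comparable to a fixed small fraction of $\zeta_k$, one shows via the upper bound in \eqref{e:eta-1-equivalence}, the a priori error estimate \eqref{e:a-priori-error}, the $L^2$-orthogonality of $\nabla\cdot u_{P_k\oplus Q_\varepsilon}$ used in estimating the oscillation/divergence terms, and \eqref{e:estimator-dominates-oscillation}, that the set $P_k\setminus(P_k\oplus Q_\varepsilon)$ satisfies the D\"orfler property with some $\bar\theta$; the thresholds $\theta<\theta^*$ (for $\eta_2$) and $\theta<\tfrac\alpha\beta\theta^*$ (for $\eta_1$) are exactly what makes $\theta<\bar\theta$, so by Lemma \ref{l:est-red} b) read in reverse (or directly by minimality of $R_k$), $\#R_k\lesssim \#\big(P_k\setminus(P_k\oplus Q_\varepsilon)\big)\le\#(P_k\oplus Q_\varepsilon)-\#P_k\lesssim \#Q_\varepsilon-\#P_0\lesssim |(u,p)|_{\tstA_*^s}^{1/s}\zeta_k^{-1/s}$. \emph{Step 4 (summation).} Using the completion bound \eqref{e:complete}, $\#P_k-\#P_0\lesssim\sum_{j=0}^{k-1}\#R_j\lesssim |(u,p)|_{\tstA_*^s}^{1/s}\sum_{j=0}^{k-1}\zeta_j^{-1/s}$; the geometric decay of $\zeta_j$ backwards from $\zeta_k$ (Step 1) makes the sum a convergent geometric series dominated by its last term, giving $\#P_k-\#P_0\lesssim |(u,p)|_{\tstA_*^s}^{1/s}\zeta_k^{-1/s}$, which rearranges to \eqref{e:optimal-rate}.

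I expect Step 3 to be the main obstacle, and within it the delicate point is the D\"orfler-type estimate for the \emph{total} error rather than the pure energy error: one must control the oscillation difference $\osc(P_k)-\osc(P_k\oplus Q_\varepsilon)$ and the divergence contributions $\|\nabla\cdot u_{P_k}\|^2$ on the refined region, using the $L^2$-orthogonality of the discrete divergence against the pressure space (as in the proof of Lemma \ref{l:local-upper-bound}) to absorb these into the estimator on the marked cells. A secondary subtlety, already flagged in the paper's remark following Main result \ref{mr:rate-mod}, is that the quasi-orthogonality constant $c$ in \eqref{e:general-quasi-orthogonality} — and hence the constant governing the geometric rate $\rho$ in Theorem \ref{t:conv} — is not known to be independent of $(u,p)$, so the final constant in \eqref{e:optimal-rate} carries that dependence through the implied constants; the statement is written accordingly and no more is claimed.
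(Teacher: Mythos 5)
Your proposal is correct and follows essentially the same route as the paper: geometric estimator decay from Theorem \ref{t:conv} under the quasi-orthogonality hypothesis, a near-optimal mesh from the definition of $\tstA_*^s$ with tolerance tied to a small multiple of $\eta_2(P_k,P_k)$, the overlay together with Lemma \ref{l:est-red}~b) to deduce the D\"orfler property for $P_k\setminus(P_k\oplus Q_\varepsilon)$ (the paper likewise uses the local equivalence \eqref{e:local-equivalence} to transfer this to $\eta_1$, which is where the thresholds $\theta<\theta^*$ and $\theta<\frac\alpha\beta\theta^*$ enter), minimality of $R_k$, and summation via \eqref{e:complete}. The only cosmetic quibble is that Lemma \ref{l:est-red}~b) is applied in its stated direction (estimator reduction $\Rightarrow$ D\"orfler), not ``in reverse,'' but your argument is the intended one.
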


\begin{proof}
We will prove the theorem only for $\eta=\eta_1$, since this case is slightly non-standard. 
Our strategy is to use the local equivalence \eqref{e:local-equivalence} to relate $\eta_1$ with $\eta_2$,
and use standard arguments for $\eta_2$.
For $P\in\conf(P_0)$, let 
\begin{equation}
e(P) = \| (u-u_P,p-p_P) \|_{V\times Q}^2+\osc(P).
\end{equation}
Note that from the {\em a priori} estimate \eqref{e:a-priori-error}, and the definition of oscillation \eqref{e:oscillation},
we have the weak monotonicity
\begin{equation}
e(P') \lesssim e(P) ,
\end{equation}
for any refinement $P'\in\conf(P_0)$ of $P$.

By definition of $\tstA_*^s$, there exists a partition $P\in\conf(P_0)$ such that 
\begin{equation}
\#P - \#P_0 \leq \eps_k^{-1/s} |(u,p)|_{\tstA_*^s}^{1/s},
\qquad\textrm{and}\qquad
E_P(u,p)^2 + \osc(P) \leq \eps_k^2 ,
\end{equation}
where $\eps_k= \delta\eta_2(P_k,P_k)$, with $\delta>0$ a small constant.
Let $P' = P\oplus P_k$.
Then the global lower bound \eqref{e:eta-1-equivalence} and the {\em a priori} estimate \eqref{e:a-priori-error} yield
\begin{equation}
\eta_2(P',P') \lesssim e(P') \lesssim e(P) \lesssim \eps_k^2 = \delta\eta_2(P_k,P_k).
\end{equation}
Upon choosing $\delta>0$ small enough, this implies
$\eta_2(P',P') \leq \mu\eta_2(P_k,P_k)$ with $\mu=\frac12(1-\frac{\beta\theta}{\alpha\theta^*})$,
and by Lemma \ref{l:est-red}, we have $\eta_2(P_k,P_k\setminus P')\geq\frac{\beta\theta}{\alpha}\eta_2(P_k,P_k)$.
Under the local equivalence \eqref{e:local-equivalence}, it becomes $\eta_1(P_k,P_k\setminus P')\geq\theta\eta_1(P_k,P_k)$.
Since by construction, $R_k\subset P_k$ is a minimal (up to a constant factor) set satisfying $\eta_1(P_k,R_k) \geq \theta \eta_1(P_k,P_k)$,
we infer $\#R_k\lesssim \#(P_k\setminus P')$, and hence
\begin{equation}
\#R_k\lesssim \#P_k - \#P' \leq \#P - \#P_0 \leq \eps_k^{-1/s} |(u,p)|_{\tstA_*^s}^{1/s} ,
\end{equation}
where we have used the property \eqref{e:overlay} of overlays.
Now we invoke \eqref{e:complete} and the geometric decay \eqref{e:eta-decay}, to conclude
\begin{equation}
\#P_\ell-\#P_0
\lesssim \sum_{k=0}^{\ell-1} \#R_k
\lesssim |(u,p)|_{\tstA_*^s}^{1/s} \sum_{k=0}^{\ell-1} \eps_k^{-1/s}  
\lesssim \eps_\ell^{-1/s}   |(u,p)|_{\tstA_*^s}^{1/s} .
\end{equation}
By recalling that the estimators dominate oscillation, the proof is completed.
\end{proof}



\section{Approximation classes}
\label{s:approximation}

In the preceding section, we have established optimal convergence rates with respect to the modified approximation classes \eqref{e:approx-class-mod}.
Ideally, however, one would like to have optimality with respect to the {\em standard approximation classes}
\begin{equation}\label{e:approx-class}
\tstA^s = \{(u,p)\in V\times Q : |(u,p)|_{\tstA^s} \equiv \sup_{N\in\N} N^s \inf_{P\in\tstP_N} E_P(u,p) < \infty\} ,
\end{equation}
where we recall $\tstP_N=\{P\in\conf(P_0):\#P-\#P_0\leq N\}$.
Perhaps more practical goal is to know interrelations between $\tstA^s$ and $\tstA_*^s$.
In this section, we will study those interrelations in terms of Besov space memberships of the data $f$.

It will be convenient to define the {\em oscillation classes}
\begin{equation}\label{e:osc-class}
\tstO^s = \{f\in L^2(\Omega,\R^n) : |f|_{\tstO^s} \equiv \sup_{N\in\N} N^s \inf_{P\in\tstP_N} \osc(P)^{\frac12}<\infty\} .
\end{equation}
Obviously, we have $\tstA_*^s\subset\tstA^s$ for all $s>0$.
In the converse direction, we have the following well-known result.

\begin{lemma}\label{l:direct}
Let $(u,p)\in\tstA^s$ and $f\in\tstO^s$ with $s>0$, where $u$, $p$, and $f$ satisfy the equation \eqref{e:Stokes-weak}.
Then we have $(u,p)\in\tstA_*^s$ with $|(u,p)|_{\tstA_*^s}\lesssim |(u,p)|_{\tstA^s}+|f|_{\tstO^s}$.
\end{lemma}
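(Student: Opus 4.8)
The plan is to prove $(u,p)\in\tstA_*^s$ by combining, for each target $N$, a partition that is good for the energy error with a partition that is good for the oscillation, and then taking their overlay. Concretely, fix $N\in\N$ and set $M=\lfloor N/2\rfloor$. Since $(u,p)\in\tstA^s$, there exists $P^{(1)}\in\tstP_M$ with $E_{P^{(1)}}(u,p)\leq |(u,p)|_{\tstA^s} M^{-s}$, and since $f\in\tstO^s$ there exists $P^{(2)}\in\tstP_M$ with $\osc(P^{(2)})^{1/2}\leq |f|_{\tstO^s} M^{-s}$. Let $P=P^{(1)}\oplus P^{(2)}$. By the overlay property \eqref{e:overlay}, $\#P-\#P_0\leq (\#P^{(1)}-\#P_0)+(\#P^{(2)}-\#P_0)\leq 2M\leq N$, so $P\in\tstP_N$.

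Next I would check that $P$ simultaneously controls both quantities. Since $P\succeq P^{(1)}$, the finite element spaces are nested, so $E_P(u,p)\leq E_{P^{(1)}}(u,p)$. For the oscillation, since $P\succeq P^{(2)}$ and the oscillation \eqref{e:oscillation} is defined via a sum of local terms $h_\tau^2\|f-g\|_{L^2(\tau)}^2$ minimized over $g\in(S^{d-2}_P)^n$, refinement can only decrease it: on each child $\tau'$ of a triangle $\tau\in P^{(2)}$ we have $h_{\tau'}\leq h_\tau$ and we may take $g$ on $\tau'$ to be the restriction of the minimizer on $\tau$, so $\osc(P)\leq\osc(P^{(2)})$. (This weak monotonicity of oscillation under refinement is exactly the fact already invoked in the proof of Theorem \ref{t:rate}.) Therefore
\begin{equation}
E_P(u,p)^2 + \osc(P) \leq E_{P^{(1)}}(u,p)^2 + \osc(P^{(2)}) \leq \big( |(u,p)|_{\tstA^s}^2 + |f|_{\tstO^s}^2 \big) M^{-2s}.
\end{equation}
Taking square roots, $\sigma^*_N(u,p)\leq (E_P(u,p)^2+\osc(P))^{1/2}\lesssim (|(u,p)|_{\tstA^s}+|f|_{\tstO^s})M^{-s}$, and since $M=\lfloor N/2\rfloor\geq N/4$ for $N\geq2$ (with the case $N=1$ absorbed into the constant), this is $\lesssim (|(u,p)|_{\tstA^s}+|f|_{\tstO^s})N^{-s}$. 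Multiplying by $N^s$ and taking the supremum over $N$ gives $(u,p)\in\tstA_*^s$ with $|(u,p)|_{\tstA_*^s}\lesssim |(u,p)|_{\tstA^s}+|f|_{\tstO^s}$.

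This argument is essentially routine; there is no real obstacle. The only point requiring a moment's care is the monotonicity of $\osc(\cdot)$ under refinement, which must be stated with the correct direction of the minimizing space (the approximating space $(S^{d-2}_P)^n$ grows under refinement, which helps, and the weights $h_\tau$ shrink, which also helps), and the bookkeeping of the factor $2$ in the mesh sizes via the overlay bound \eqref{e:overlay}. Everything else follows from the nestedness of the Taylor-Hood spaces and the definitions of the approximation and oscillation classes.
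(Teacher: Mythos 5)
Your proof is correct and follows essentially the same route as the paper's: pick near-optimal partitions for the energy error and for the oscillation separately, take their overlay, and use monotonicity of both quantities under refinement together with the cardinality bound \eqref{e:overlay}. The only difference is cosmetic bookkeeping (you split the budget as $M=\lfloor N/2\rfloor$ whereas the paper accepts $\#P-\#P_0\leq 2N$ and absorbs the factor into the constant), and your explicit justification of the monotonicity of $\osc(\cdot)$ under refinement is a welcome elaboration of what the paper calls ``monotonicity arguments.''
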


\begin{proof}
Let $N\in\N$ be an arbitrary number.
By definition of $\tstA^s$, there exists a partition $P'\in\conf(P_0)$ such that
\begin{equation}
E_{P'}(u,p) \leq 2 N^{-s} |(u,p)|_{\tstA^s},
\qquad\textrm{and}\qquad
\#P' - \#P_0 \leq N.
\end{equation}
Similarly, by definition of $\tstO^s$, there exists a partition $P''\in\conf(P_0)$ such that
\begin{equation}
\osc(P'') \leq 2N^{-2s} |f|_{\tstO^s}^2,
\qquad\textrm{and}\qquad
\#P'' - \#P_0 \leq N.
\end{equation}
Then for $P=P'\oplus P''$ we have $\#P - \#P_0 \leq 2N$ by \eqref{e:overlay}.
Moreover, monotonicity arguments guarantee that 
\begin{equation}
E_P(u,p)^2 + \osc(P) 
\leq
E_{P'}(u,p)^2 + \osc(P'') 
\lesssim
N^{-2s} ( |(u,p)|_{\tstA^s}^2 + |f|_{\tstO^s}^2 ) ,
\end{equation}
which completes the proof.
\end{proof}

The preceding lemma makes us wonder how regular $f$ must be in order for it to be a member of $\tstO^s$.
By using quasi-uniform partitions, one can show that $H^{\alpha}(\Omega,\R^n)\subset\tstO^{s}$ for $s=\frac{\alpha+1}n$ and $\alpha\geq0$.
For instance, if we want to recover the optimal convergence rates of the lowest order Taylor-Hood elements ($d=2$), 
then this would require $f\in H^1(\Omega,\R^n)$, which appears to be a bit excessive.
As it is natural in the current setting, we would like to investigate the question in terms of the Besov regularity of $f$. 
Let us make precise what we mean by Besov spaces.
For $0<q\leq\infty$, the $m$-th order {\em $L^q$-modulus of smoothness} is
\begin{equation}
\omega_m(u,t,\Omega)_q=\sup_{|h|\leq t}\|\Delta_h^mu\|_{L^q(\Omega_{rh})}
\end{equation}
where $\Omega_{mh}=\{x\in\Omega:[x+mh]\subset\Omega\}$,
and $\Delta_h^m$ is the $m$-th order forward difference operator defined recursively by $[\Delta_h^1u](x)=u(x+h)-u(x)$ and $\Delta_h^ku=\Delta_h^1(\Delta_h^{k-1})u$,
i.e.,
\begin{equation}
\Delta_h^mu (x) = \sum_{k=0}^m (-1)^{m+k} \binom{m}{k} u(x+kh).
\end{equation}
Then, for $0<q,r\leq\infty$ and $\alpha\geq0$, with $m>\alpha-\max\{0,\frac1q-1\}$ being an integer, 
the {\em Besov space} $B^\alpha_{q,r}(\Omega)$ consists of those $u\in L^q(\Omega)$ for which
\begin{equation}\label{e:Besov-norm}
|u|_{B^\alpha_{q,r}(\Omega)} = \| t\mapsto t^{-\alpha-1/r}\omega_m(u,t,\Omega)_q \|_{L^r((0,\infty))},
\end{equation}
is finite.
Since $\Omega$ is bounded, being in a Besov space is a statement about the size of $\omega_m(u,t,\Omega)_q$ only for small $t$.
From this it is easy to derive the useful equivalence
\begin{equation}\label{e:Besov-norm-disc}
|u|_{B^\alpha_{q,r}(\Omega)} \eqsim \left\| (\lambda^{j\alpha}\omega_m(u,\lambda^{-j},\Omega)_q)_{j\geq0} \right\|_{\ell^r},
\end{equation}
for any constant $\lambda>1$.
The mapping $\|\cdot\|_{B^\alpha_{q,r}(\Omega)}=\|\cdot\|_{L^q(\Omega)}+|\cdot|_{B^\alpha_{q,r}(\Omega)}$ defines a norm when $q,r\geq1$ and only a quasi-norm otherwise.
So long as $m>\alpha-\max\{0,\frac1q-1\}$, different choices of $m$ will result in (quasi-) norms that are equivalent to each other.
On the other hand, if we took $m<\alpha-\max\{0,\frac1q-1\}$, then the space $B^\alpha_{q,r}$ would have been trivial in the sense that $B^\alpha_{q,r}=\Pol_{m-1}$.

We have the sub-additivity property
\begin{equation}\label{e:Besov-sub-add}
\sum_{\tau\in P} |f|_{B^{\alpha}_{q,q}(\tau)}^q
\lesssim |f|_{B^{\alpha}_{q,q}(\Omega)}^q ,
\qquad
f\in B^{\alpha}_{q,q}(\Omega) ,
\end{equation}
for $P\in\conf(P_0)$ and $0<q<\infty$.
A slightly stronger form of this is also true:
Let $\{\tau_k\}$ be a finite collection of disjoint triangles, with each $\tau_k\in P_k$ for some $P_k\in\conf(P_0)$.
Let $\hat\tau_k$ denote the star around $\tau_k$, with respect to $P_k$,
i.e., let $\hat\tau_k$ be the interior of $\bigcup\{\bar\sigma\in P_k:\bar\sigma\cap\bar\tau_k\neq\varnothing\}$.
Then we have
\begin{equation}\label{e:Besov-sub-add-overlap}
\sum_{k} |f|_{B^{\alpha}_{q,q}(\hat\tau_k)}^q
\lesssim |f|_{B^{\alpha}_{q,q}(\Omega)}^q ,
\qquad
f\in B^{\alpha}_{q,q}(\Omega) .
\end{equation}

Now we describe various embedding relationships among the Besov and Sobolev spaces.
Since $\Omega$ is bounded, it is clear that $B^\alpha_{q,r}(\Omega)\hookrightarrow B^{\alpha}_{q',r}(\Omega)$
for any $\alpha\geq0$, $0<r\leq\infty$ and $\infty\geq q>q'>0$.
From the equivalence \eqref{e:Besov-norm-disc}, we have the lexicographical ordering
$B^\alpha_{q,r}(\Omega)\hookrightarrow B^{\alpha'}_{q,r'}(\Omega)$ for $\alpha>\alpha'$ with any $0<r,r'\leq\infty$,
and
$B^\alpha_{q,r}(\Omega)\hookrightarrow B^\alpha_{q,r'}(\Omega)$ for $0<r<r'\leq\infty$.
Nontrivial embeddings are
$B^\alpha_{q,r}(\Omega)\hookrightarrow B^{\alpha'}_{q',r}(\Omega)$
for $\frac{\alpha-\alpha'}n=\frac1q-\frac1{q'}>0$,
and
\begin{equation}\label{e:Besov-Lebesgue}
B^\alpha_{q,q}(\Omega)\hookrightarrow L^{r}(\Omega),
\qquad \textrm{for} \qquad
\frac\alpha{n} = \frac1q-\frac1r>0.
\end{equation}
Finally, we recall the fact that $B^\alpha_{2,2}(\Omega)=H^\alpha(\Omega)$ for all $\alpha>0$.

An important tool we will need is the {\em Whitney estimate}
\begin{equation}\label{e:Whitney-est}
\inf_{g\in\Pol_m} \|f-g\|_{L^q(G)} \lesssim \omega_{m+1}(f,\diam\,G,G)_q,
\qquad f\in L^q(G),
\end{equation}
that holds for any convex domain $G\subset\R^n$, 
with the implicit constant depending only on $n$, $m$, and $q$, see \cite{DL04a}.
The same estimate is also true when $G$ is the star around $\tau\in P$ for some partition $P\in\conf(P_0)$,
with the implicit constant additionally depending on the shape regularity constant of $\conf(P_0)$, see \cite{GM13}.

In the ensuing discussions, we will often need vector-valued versions of the Besov and other function spaces,  
that should strictly speaking be denoted by $B_{p,q}^\alpha(\Omega,\R^n)$ or $L^p(\Omega)^n$ etc.
However, for simplicity of notations, we will simply write $B_{p,q}^\alpha(\Omega)$ or $L^p(\Omega)$ etc. to mean the same things.

\begin{theorem}\label{t:direct}
We have $B^\alpha_{q,q}(\Omega,\R^n) \subset \tstO^s$ with $s=\frac{\alpha+1}n$, 
as long as $0<q<\infty$, $\frac\alpha{n}\geq\frac1q - \frac12$, and $\alpha<d-1+\max\{0,\frac1q-1\}$.
\end{theorem}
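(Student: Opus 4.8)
The plan is to reduce the estimate on $\osc(P)$ to a local, element-by-element approximation problem and then to invoke an adaptive-tree construction driven by a suitable error indicator, in the spirit of \cite{BDDP02,GM13}. Recall from \eqref{e:oscillation} that $\osc(P)=\min_{g\in(S^{d-2}_P)^n}\sum_{\tau\in P}h_\tau^2\|f-g\|_{L^2(\tau)}^2$, so it suffices to bound, for each candidate partition $P$, the quantity $\sum_{\tau\in P}h_\tau^2\inf_{g\in\Pol_{d-2}}\|f-g\|_{L^2(\tau)}^2$, where $h_\tau=|\tau|^{1/n}$. By the Whitney estimate \eqref{e:Whitney-est} (applied on $\tau$ itself, which is convex), the local term is controlled by $h_\tau^2\,\omega_{d-1}(f,\diam\tau,\tau)_2^2$, and since $d-1>\alpha-\max\{0,\frac1q-1\}$ by hypothesis, this in turn is bounded by a constant times $h_\tau^{2}\,|\tau|^{2\alpha/n}\,h_\tau^{-?}$-type quantities only in a crude way; the sharp route is instead to compare the $L^2$-based modulus with an $L^q$-based Besov seminorm on $\tau$. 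Concretely, for $q<2$ one uses the Besov embedding to pass from $L^q$ to $L^2$ locally, picking up a factor of a power of $|\tau|$; the relevant inequality is $\inf_{g\in\Pol_{d-2}}\|f-g\|_{L^2(\tau)}\lesssim |\tau|^{\frac1q-\frac12}\,\omega_{d-1}(f,\diam\tau,\tau)_q\lesssim |\tau|^{\frac1q-\frac12}\,|f|_{B^\alpha_{q,q}(\tau)}\,|\tau|^{\alpha/n}$, where the last step uses a Whitney-type estimate in $L^q$ together with the definition of the Besov seminorm and $\diam\tau\eqsim|\tau|^{1/n}$ by shape regularity. For $q\geq2$ one has the simpler chain $\inf_{g}\|f-g\|_{L^2(\tau)}\lesssim |\tau|^{\alpha/n}|f|_{B^\alpha_{q,q}(\tau)}$ directly (after restricting $B^\alpha_{q,q}\hookrightarrow B^\alpha_{2,2}$ on the bounded set $\tau$ when $q>2$, or using it as is when $q=2$). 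In all cases the local oscillation contribution of $\tau$ is bounded by a constant times $|\tau|^{2s}\,|f|_{B^\alpha_{q,q}(\tau)}^2$ with $2s=\frac{2(\alpha+1)}n$ once one accounts for the extra $h_\tau^2=|\tau|^{2/n}$ weight and, when $q<2$, the Sobolev-embedding power $|\tau|^{2(\frac1q-\frac12)}$; the condition $\frac\alpha n\geq\frac1q-\frac12$ is exactly what guarantees that this extra power does not cost us, i.e. that $|\tau|^{\frac2q-1}\lesssim |\tau|^{2\alpha/n}$ up to the fixed bound $\diam\Omega$. Write $a_\tau:=|\tau|^{2s}|f|_{B^\alpha_{q,q}(\tau)}^2$ for the resulting local indicator.

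Thus we are reduced to the following combinatorial approximation statement: given the indicators $\{a_\tau\}$, defined on every $\tau$ in every $P\in\conf(P_0)$ and enjoying, by \eqref{e:Besov-sub-add}, the summability $\sum_{\tau\in P}|f|^q_{B^\alpha_{q,q}(\tau)}\lesssim |f|^q_{B^\alpha_{q,q}(\Omega)}$, we must produce for each $N$ a partition $P\in\tstP_N$ with $\sum_{\tau\in P}a_\tau\lesssim N^{-2s}|f|^2_{B^\alpha_{q,q}(\Omega)}$. This is precisely the kind of result handled by the adaptive tree-thresholding argument of Binev--Dahmen--DeVore and Gaspoz--Morin. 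I would run the standard greedy/thresholding algorithm on the infinite master tree of possible elements: for a threshold $\epsilon>0$, subdivide any element whose indicator exceeds $\epsilon$ (or, more robustly, whose \emph{accumulated} Besov mass exceeds a level), complete to a conforming partition via \eqref{e:complete}, and estimate the cardinality. The crucial quantitative input is that the exponent relating $a_\tau$ to $|f|^q_{B^\alpha_{q,q}(\tau)}$ and to $|\tau|$ is such that the sequence of local errors lies in the weak-$\ell^p$ space with $\frac1p=s+\frac12$ — equivalently $p = \frac{2n}{2\alpha+n+n}$-type exponent — which is the borderline summability needed to convert a thresholding bound into the rate $N^{-2s}$. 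I would use the $B^\alpha_{q,q}$-mass as the natural ``error functional'' on subtrees, exploit its sub-additivity \eqref{e:Besov-sub-add} (and the stronger overlapping version \eqref{e:Besov-sub-add-overlap} if I need stars rather than elements), and then the Stevenson/BDD-type lemma gives $\#P-\#P_0\lesssim \epsilon^{-1/s}|f|^{1/s}_{B^\alpha_{q,q}(\Omega)}$ together with $\sum_{\tau\in P}a_\tau\lesssim \epsilon^2$; eliminating $\epsilon$ yields the claim.

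The main obstacle I anticipate is \emph{not} the tree construction, which is by now fairly mechanical once the right summability exponent is identified, but rather the honest verification of the local inequality $h_\tau^2\inf_{g\in\Pol_{d-2}}\|f-g\|_{L^2(\tau)}^2\lesssim |\tau|^{2s}|f|^2_{B^\alpha_{q,q}(\tau)}$ \emph{uniformly} over all $\tau$ arising in $\conf(P_0)$, with a constant depending only on $\shape$, $n$, $d$, $\alpha$, $q$. Two subtleties must be handled carefully here: first, for $q<2$ the passage from $L^q$ to $L^2$ requires a local Sobolev/Besov embedding on $\tau$, and one must check that the embedding constant scales correctly under the natural dilation normalizing $\tau$ to a reference simplex — this is where the constraint $\frac\alpha n\geq\frac1q-\frac12$ is consumed and where one cannot afford to be sloppy with the power of $|\tau|$; second, the Whitney estimate must be applied in the $L^q$ (quasi-)norm, and for $q<1$ one is working in a quasi-Banach setting where the triangle inequality degrades — but since \eqref{e:Whitney-est} is quoted for all $0<q\le\infty$ and we are always on a single convex $\tau$, this is legitimate, and the passage to the global bound uses only the $q$-sub-additivity \eqref{e:Besov-sub-add}, which is stated for $0<q<\infty$. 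A secondary technical point is the completion step: after thresholding one has a set of marked elements, and \eqref{e:complete} controls the conforming completion only linearly in the number of marks, so one must be sure the number of marks is itself $O(\epsilon^{-1/s}|f|^{1/s}_{B^\alpha_{q,q}})$ — this follows from the weak-$\ell^p$ membership of $(a_\tau)$ by the same counting as in \cite{BDD04}. Modulo these estimates, the theorem follows by choosing $\epsilon\eqsim N^{-s}|f|_{B^\alpha_{q,q}(\Omega)}$.
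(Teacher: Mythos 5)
Your overall architecture is the same as the paper's: localize the oscillation to a per-element indicator via a Whitney estimate in $L^q$ combined with the embedding $B^\alpha_{q,q}\hookrightarrow L^2$, then run a BDDP/Gaspoz--Morin thresholding loop, count the marked elements in dyadic layers $\{2^{-j-1}\leq|\tau|<2^{-j}\}$ using the sub-additivity \eqref{e:Besov-sub-add}, and finish with \eqref{e:complete}. However, your local estimate carries a sign error that makes the stated indicator wrong for $q\neq 2$. The correct scaling (obtained by mapping $\tau$ to a reference element, where $\|f\|_{L^2(\tau)}$ scales like $|\tau|^{1/2}$ and $|f|_{B^\alpha_{q,q}(\tau)}$ like $|\tau|^{1/q-\alpha/n}$) is
\begin{equation*}
h_\tau^2\inf_{g\in(\Pol_{d-2})^n}\|f-g\|_{L^2(\tau)}^2\;\lesssim\;|\tau|^{2\delta}\,|f|_{B^\alpha_{q,q}(\tau)}^2,
\qquad
\delta=\tfrac{\alpha+1}{n}+\tfrac12-\tfrac1q ,
\end{equation*}
not $|\tau|^{2s}$ with $s=\frac{\alpha+1}{n}$. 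Your intermediate inequality $\inf_g\|f-g\|_{L^2(\tau)}\lesssim|\tau|^{1/q-1/2}\,\omega_{d-1}(f,\diam\tau,\tau)_q$ is H\"older in the wrong direction when $q<2$ (H\"older controls the $L^q$ norm by the $L^2$ norm on a set of finite measure, never the reverse), and the claimed rescue $|\tau|^{2/q-1}\lesssim|\tau|^{2\alpha/n}$ is, for small $|\tau|$, equivalent to $\frac1q-\frac12\geq\frac\alpha n$, i.e.\ the reverse of the hypothesis. Since $\delta<s$ when $q<2$, the quantity $|\tau|^{2s}|f|^2_{B^\alpha_{q,q}(\tau)}$ is strictly smaller than the true local bound as $|\tau|\to0$, so the thresholding loop would be driven by an indicator that does not dominate the local oscillation.

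The gap is repairable, and the repair is exactly how the paper proceeds: the hypothesis $\frac\alpha n\geq\frac1q-\frac12$ is consumed only in guaranteeing $\delta\geq\frac1n>0$ (equivalently, that $B^\alpha_{q,q}$ embeds locally into $L^2$ with a power of $|\tau|$ that stays bounded), and one then runs the thresholding with $e(\tau)=|\tau|^{2\delta}|f|^2_{B^\alpha_{q,q}(\tau)}$. The rate $N^{-2s}$ is restored not by upgrading $2\delta$ to $2s$ in the local bound, but in the counting step: summing $|f|^q_{B^\alpha_{q,q}(\tau)}$ over a dyadic layer and balancing against the trivial bound $m_j\lesssim 2^j$ gives $\#R\lesssim\eps^{-q/(2+2q\delta)}|f|^{q/(1+q\delta)}_{B^\alpha_{q,q}(\Omega)}$, and the identity $\frac1q+\delta=s+\frac12$ yields $q/(1+q\delta)=2/(1+2s)$ --- this is the precise form of the weak-$\ell^p$ membership with $\frac1p=s+\frac12$ that you allude to. With that correction your argument closes and coincides with the paper's proof.
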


\begin{proof}
In this proof, it is understood that all Besov seminorms are defined using $\omega_{d-1}$.
Hence in particular, we have $|g|_{B^{\alpha}_{q,q}}=0$ for $g\in\Pol_{d-2}$.
For any $g\in(\Pol_{d-2})^n$ and any convex domain $G\subset\R^n$, we have 
\begin{equation}
\|f - g\|_{L^2(G)} 
\lesssim 
\|f-g\|_{L^q(G)} + |f|_{B^{\alpha}_{q,q}(G)},
\end{equation}
by the embedding \eqref{e:Besov-Lebesgue},
and
\begin{equation}
\|f-g\|_{L^q(G)} 
\lesssim \omega_{d-1}(f,\diam\,G,G)_q
\lesssim |f|_{B^{\alpha}_{q,q}(G)} ,
\end{equation}
by the Whitney estimate \eqref{e:Whitney-est}.
Recalling that $h_\tau=|\tau|^{1/n}$, a homogeneity argument gives 
\begin{equation}
\osc(P) = \min_{g\in (S^{d-2}_P)^n} \sum_{\tau\in P} h_\tau^2 \|f-g\|_{L^2(\tau)}^2
\lesssim \sum_{\tau\in P} |\tau|^{2\delta} |f|_{B^{\alpha}_{q,q}(\tau)}^2 ,
\end{equation}
for $P\in\conf(P_0)$,
where $\delta = \frac{\alpha+1}n+\frac12-\frac1q\geq\frac1n$.

The rest of the proof follows that of Proposition 5.2 in \cite{BDDP02};
we include it here for completeness.
Let
\begin{equation}\label{e:err-ind-0}
e(\tau,P) = |\tau|^{2\delta} |u|_{B^{\alpha}_{q,q}(\tau)}^2,
\end{equation}
for $\tau\in P$ and $P\in\conf(P_0)$.
Then for any given $\eps>0$,
below we will specify a procedure to generate a partition $P\in\conf(P_0)$ satisfying 
\begin{equation}\label{e:rho-bnd-pf}
\sum_{\tau\in P} e(\tau,P) \lesssim (\#P) \eps,
\end{equation}
and 
\begin{equation}\label{e:card-P-bnd-pf}
\#P - \#P_0 \leq c \eps^{-1/(1+2s)} |f|_{B^{\alpha}_{q,q}(\Omega)}^{2/(1+2s)},
\end{equation}
where $s=\frac{\alpha+1}n$.
Then, for any given $N>0$, by choosing 
\begin{equation}
\eps = (c/N)^{1+2s} |f|_{B^{\alpha}_{q,q}(\Omega)}^2,
\end{equation}
we would be able to guarantee a partition $P\in\conf(P_0)$ satisfying
$\#P\leq\#P_0+N$ and 
\begin{equation}
\osc(P)\lesssim\sum_{\tau\in P} e(\tau,P) \lesssim N^{-2s} |f|_{B^{\alpha}_{q,q}(\Omega)}^2.
\end{equation}

Let $\eps>0$.
We then recursively define $R_k=\{\tau\in P_k:e(\tau,P_k)>\eps\}$ and $P_{k+1}=\refine(P_k,R_k)$ for $k=0,1,\ldots$.
For all sufficiently large $k$ we will have $R_k=\varnothing$ since $|f|_{B^{\alpha}_{q,q}(\tau)} \lesssim |f|_{B^{\alpha}_{q,q}(\Omega)}$,
and $|\tau|$ is halved at each refinement.
Let $P=P_k$, where $k$ marks the first occurrence of $R_k=\varnothing$.
Since $e(\tau,P_k)\leq\eps$ for $\tau\in P_k$, \eqref{e:rho-bnd-pf} is immediate.

In order to get a bound on $\#P$, we estimate the cardinality of $R=R_0\cup R_1\cup\ldots\cup R_{k-1}$, and use \eqref{e:complete}.
Let $\Lambda_j=\{\tau\in R: 2^{-j-1}\leq|\tau|<2^{-j}\}$ for $j\in\Z$, and let $m_j=\#\Lambda_j$.
Note that the elements of $\Lambda_j$ (for any fixed $j$) are disjoint, since if any two elements intersect,
then they must come from different $R_k$'s as each $R_k$ consists of disjoint elements,
and hence the ratio between the measures of the two elements must lie outside $(\frac12,2)$.
This gives the trivial bound
\begin{equation}
m_j \leq 2^{j+1} |\Omega|.
\end{equation}
On the other hand, we have $e(\tau,P_k)>\eps$ for $\tau\in \Lambda_j$ with some $k$, which means
\begin{equation}
\eps < |\tau|^{2\delta} |f|_{B^{\alpha}_{q,q}(\tau)}^2
< 2^{-2j\delta} |f|_{B^{\alpha}_{q,q}(\tau)}^2 .
\end{equation}
Summing over $\tau\in\Lambda_j$, we get
\begin{equation}
m_j\eps^{q/2} 
\leq 2^{-jq\delta} \sum_{\tau\in \Lambda_j} |f|_{B^{\alpha}_{q,q}(\tau)}^q
\lesssim 2^{-jq\delta} |f|_{B^{\alpha}_{q,q}(\Omega)}^q,
\end{equation}
where we have used \eqref{e:Besov-sub-add}.
Finally, summing over $j$, we obtain
\begin{equation}
\#R
\leq \sum_{j=-\infty}^\infty m_j
\lesssim \sum_{j=-\infty}^\infty \min\left\{ 2^{j}, \eps^{-q/2} 2^{-jq\delta'} |f|_{B^{\alpha}_{q,q}(\Omega)}^q \right\}
\lesssim \eps^{-q/(2+2q\delta)} |f|_{B^{\alpha}_{q,q}(\Omega)}^{q/(1+q\delta)},
\end{equation}
which, in view of \eqref{e:complete} and $q/(1+q\delta) = 2/(1+2s)$, establishes the bound \eqref{e:card-P-bnd-pf}.
\end{proof}

In light of Lemma \ref{l:direct}, we immediately get the following corollary, which is one of our main results mentioned in the Introduction.

\begin{corollary}
Let $f\in B^\alpha_{q,q}(\Omega,\R^n)$ for some $0<q<\infty$ and $\alpha\geq\frac{n}q - \frac{n}2$ satisfying $0<\alpha<d-1+\max\{0,\frac1q-1\}$.
Then $(u,p)\in \tstA^s$ implies $(u,p)\in\tstA_*^s$, with $s=\frac{\alpha+1}n$.
\end{corollary}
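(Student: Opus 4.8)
The plan is to deduce the Corollary directly from the two results that immediately precede it, namely Theorem \ref{t:direct} and Lemma \ref{l:direct}. The hypotheses on $q$ and $\alpha$ in the Corollary coincide (after rewriting $\alpha\geq\frac nq-\frac n2$ as $\frac\alpha n\geq\frac1q-\frac12$) with the hypotheses of Theorem \ref{t:direct}, so that theorem applies and yields $B^\alpha_{q,q}(\Omega,\R^n)\subset\tstO^s$ with $s=\frac{\alpha+1}n$. In particular, the given data $f\in B^\alpha_{q,q}(\Omega,\R^n)$ belongs to $\tstO^s$.

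Next I would invoke Lemma \ref{l:direct}. We are assuming $(u,p)\in\tstA^s$, and by the previous step $f\in\tstO^s$; moreover $u$, $p$, $f$ satisfy \eqref{e:Stokes-weak} by hypothesis (this is built into the definition of the approximation classes, since $f$ is determined by $(u,p)$). Hence Lemma \ref{l:direct} gives $(u,p)\in\tstA_*^s$, together with the quantitative bound $|(u,p)|_{\tstA_*^s}\lesssim|(u,p)|_{\tstA^s}+|f|_{\tstO^s}$, which in turn is controlled by $|(u,p)|_{\tstA^s}+|f|_{B^\alpha_{q,q}(\Omega)}$ using the embedding constant from Theorem \ref{t:direct}. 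This completes the argument.

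There is essentially no obstacle here: the Corollary is a formal concatenation of the two preceding statements, and the only thing worth spelling out is the (trivial) verification that the admissible parameter region in the Corollary matches the one in Theorem \ref{t:direct}. If one wanted to be careful, the single point to check is that $f$ indeed lies in the $L^2$-based class $\tstO^s$ rather than merely being Besov-regular — but this is exactly the content of Theorem \ref{t:direct}, whose proof already handles the passage from $B^\alpha_{q,q}$ to an $L^2$ oscillation bound via the embedding \eqref{e:Besov-Lebesgue} under the condition $\frac\alpha n\geq\frac1q-\frac12$. So the proof is a two-line citation.
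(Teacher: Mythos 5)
Your proposal is correct and is exactly the paper's argument: the corollary is stated as an immediate consequence of Theorem \ref{t:direct} (which places $f$ in $\tstO^s$ under the given parameter conditions) combined with Lemma \ref{l:direct}. Nothing further is needed.
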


\begin{remark}
By using the same arguments as above, one can get analogous results for general linear equations with constant coefficients.
Details of this investigation will be reported elsewhere.
\end{remark}

For completeness, we include the following result, which establishes a (one-sided) characterization of 
the standard approximation classes $\tstA^s$ in terms of Besov spaces.

\begin{theorem}\label{t:direct-H1}
We have $( B^{1+ns}_{q,q}(\Omega,\R^n) \cap V ) \times ( B^{ns}_{q,q}(\Omega) \cap Q ) \hookrightarrow\tstA^s$,
as long as $0<q<\infty$, $s>\frac1q-\frac12$, and $0<ns<d+\max\{0,\frac1q-1\}$.
\end{theorem}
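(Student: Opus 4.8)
The plan is to mimic the strategy of Theorem~\ref{t:direct}, but now working directly with the energy error $E_P(u,p)$ instead of the oscillation. The starting point is a local near-best approximation estimate: for a single simplex $\tau$ (or its star $\hat\tau$) in a partition $P\in\conf(P_0)$, one can bound the local best polynomial approximation error of $u$ in the $H^1$-seminorm and of $p$ in the $L^2$-norm by local Besov seminorms. Concretely, using the Whitney estimate \eqref{e:Whitney-est} applied to the gradient components $\partial_i u_k$ on the star $\hat\tau$, together with a homogeneity (scaling) argument, one gets
\begin{equation}
\inf_{v\in(\Pol_d)^n} \|u-v\|_{H^1(\tau)}^2 \lesssim |\tau|^{2\delta_1} |u|_{B^{1+ns}_{q,q}(\hat\tau)}^2,
\qquad
\inf_{q'\in\Pol_{d-1}} \|p-q'\|_{L^2(\tau)}^2 \lesssim |\tau|^{2\delta_0} |p|_{B^{ns}_{q,q}(\hat\tau)}^2,
\end{equation}
where the exponents work out to $\delta_1=\delta_0=s+\tfrac12-\tfrac1q\geq\tfrac1n$ under the hypothesis $s>\tfrac1q-\tfrac12$. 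The condition $ns<d+\max\{0,\tfrac1q-1\}$ is exactly what is needed so that the Besov seminorms with $\omega_d$ (resp.\ $\omega_{d-1}$) are nontrivial and the polynomial degrees $d$, $d-1$ suffice. A point requiring a little care is that the Taylor--Hood velocity space $V_P$ has a global continuity and boundary constraint, and $Q_P$ is globally continuous: one must pass from local polynomial approximation to a \emph{global} conforming approximant. This is standard and is handled exactly as in \cite{SZ90,GM13} by a Scott--Zhang-type quasi-interpolation argument, which reproduces local polynomials on stars and hence inherits the local error bounds above (at the price of the star $\hat\tau$ rather than $\tau$, which is why we invoke \eqref{e:Besov-sub-add-overlap}).

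Next I would set up the error indicator $e(\tau,P)=|\tau|^{2\delta}\big(|u|_{B^{1+ns}_{q,q}(\hat\tau)}^2+|p|_{B^{ns}_{q,q}(\hat\tau)}^2\big)$ with $\delta=s+\tfrac12-\tfrac1q$ and run the \emph{same} greedy refinement loop as in Theorem~\ref{t:direct}: given $\eps>0$, set $R_k=\{\tau\in P_k:e(\tau,P_k)>\eps\}$, $P_{k+1}=\refine(P_k,R_k)$, and stop at the first $k$ with $R_k=\varnothing$, calling the result $P=P(\eps)$. By construction $\sum_{\tau\in P}e(\tau,P)\lesssim(\#P)\eps$, and summing the local estimates over $P$ (using that the stars overlap with bounded multiplicity, via \eqref{e:Besov-sub-add-overlap}) gives $E_P(u,p)^2\lesssim\sum_{\tau\in P}e(\tau,P)$. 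The cardinality bound $\#P-\#P_0\lesssim\eps^{-1/(1+2s)}(|u|_{B^{1+ns}_{q,q}}^2+|p|_{B^{ns}_{q,q}}^2)^{1/(1+2s)}$ follows verbatim from the Binev--Dahmen--DeVore--Petrushev layer-counting argument in the proof of Theorem~\ref{t:direct}: stratify $R=\bigcup_k R_k$ by $\Lambda_j=\{\tau\in R:2^{-j-1}\leq|\tau|<2^{-j}\}$, use the trivial bound $m_j\leq 2^{j+1}|\Omega|$ on one side and the threshold $e(\tau,P_k)>\eps$ together with sub-additivity \eqref{e:Besov-sub-add} on the other, and interpolate the two bounds over $j$. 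Finally, choosing $\eps$ as a suitable power of $N$ yields, for every $N$, a partition $P\in\tstP_N$ with $E_P(u,p)\lesssim N^{-s}(|u|_{B^{1+ns}_{q,q}}+|p|_{B^{ns}_{q,q}})$, which is the claimed embedding into $\tstA^s$.

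The one genuinely new ingredient compared with Theorem~\ref{t:direct}---and the step I expect to be the main obstacle---is establishing the \emph{simultaneous} conforming approximation of the pair $(u,p)$ by $(v,q')\in V_P\times Q_P$ with the two local bounds holding on the same partition; in particular one must make sure the quasi-interpolation of the velocity respects the homogeneous Dirichlet boundary condition and that the resulting errors are still controlled by the \emph{star} Besov seminorms so that \eqref{e:Besov-sub-add-overlap} applies with bounded overlap. Once this local-to-global approximation lemma is in place (it is essentially \cite{GM13} adapted to the two spaces), the rest is a routine transcription of the argument in Theorem~\ref{t:direct}. A secondary technical check is that the exponent arithmetic $q/(1+q\delta)=2/(1+2s)$ from that proof goes through unchanged here, which it does since $\delta$ has the same form.
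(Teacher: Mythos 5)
Your proposal follows essentially the same route as the paper's proof: local Whitney/Besov estimates on element stars combined with a Scott--Zhang quasi-interpolant for the velocity (the paper uses the $L^2$ projector for the pressure), followed by the identical greedy refinement and layer-counting argument of Theorem~\ref{t:direct} with the indicator $|\tau|^{2\delta}$ times star-local Besov seminorms, $\delta=s+\frac12-\frac1q$. The only cosmetic deviations are that the paper applies Whitney to $u$ itself with $\omega_{d+1}$ (and $\omega_d$ for $p$) rather than to $\nabla u$, it approximates $u$ and $p$ independently so no genuinely ``simultaneous'' approximation lemma is needed, and the hypothesis only gives $\delta>0$ rather than $\delta\geq\frac1n$ --- none of which affects the argument.
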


\begin{proof}
In this proof, we assume that the Besov space seminorms related to the velocity variable are defined through $\omega_{d+1}$,
and that the seminorms related to the pressure variable are defined through $\omega_d$.
For $P\in\conf(P_0)$, let $Z_P:V\to V_P$ be a Scott-Zhang quasi-interpolation operator preserving the Dirichlet condition on $\partial\Omega$.
Let $u\in B^{1+ns}_{q,q}(\Omega,\R^n)\cap V$, and let $P\in\conf(P_0)$.
Then for any $\tau\in P$ and any $v\in(\Pol_d)^n$, we have
\begin{equation}
\begin{split}
\|u-Z_Pu\|_{H^1(\tau)} 
&\leq \|u-v\|_{H^1(\tau)} + \|Z_P(u-v)\|_{H^1(\tau)} \\
&\lesssim |\tau|^{-1/n} \|u-v\|_{L^2(\hat\tau)} + |u-v|_{H^1(\hat\tau)},
\end{split}
\end{equation}
where $\hat\tau$ is the star around $\tau$.
Now we shift to the reference situation where $\diam\,\hat\tau=1$.
By the embedding $B^{1+ns}_{q,q}\hookrightarrow H^1$, we can bound the last term as
\begin{equation}
|u-v|_{H^1(\hat\tau)} \lesssim \|u-v\|_{L^q(\hat\tau)} + |u-v|_{B^{1+ns}_{q,q}(\hat\tau)} = \|u-v\|_{L^q(\hat\tau)} + |u|_{B^{1+ns}_{q,q}(\hat\tau)} .
\end{equation}
For the other term, we have
\begin{equation}
\begin{split}
\|u-v\|_{L^2(\hat\tau)}
\lesssim \|u-v\|_{L^q(\hat\tau)} + |u|_{B^{1+ns}_{q,q}(\hat\tau)},
\end{split}
\end{equation}
this time using the embedding $B^{1+ns}_{q,q}\hookrightarrow L^2$.
Finally, Whitney's estimate gives
\begin{equation}
\|u-p\|_{L^q(\hat\tau)} 
\lesssim \omega_{d+1}(u,\hat\tau)_q
\lesssim |u|_{B^{1+ns}_{q,q}(\hat\tau)}.
\end{equation}
Leaving the reference situation by homogeneity, and combining the result, we have 
\begin{equation}
\|u-Z_Pu\|_{H^1(\Omega)}^2 
\lesssim \sum_{\tau\in P} |\tau|^{2\delta} |u|_{B^{1+ns}_{q,q}(\hat\tau)}^2 ,
\end{equation}
with $\delta=s+\frac12-\frac1q>0$.
Similarly, one can derive
\begin{equation}
\|p-\Pi_Pp\|_{L^2(\Omega)}^2 
\lesssim \sum_{\tau\in P} |\tau|^{2\delta} |p|_{B^{ns}_{q,q}(\tau)}^2 ,
\qquad
p\in B^{ns}_{q,q}(\Omega) \cap Q ,
\end{equation}
where $\Pi_P:Q\to Q_P$ is the $L^2$-orthogonal projector.
The rest of the proof can be completed in the same way as that of the preceding theorem.
\end{proof}

\section*{Acknowledgements}

The author would like to thank Dirk Praetorius for an important comment regarding an earlier draft of this manuscript.
This work is supported by an NSERC Discovery Grant and an FQRNT Nouveaux Chercheurs Grant.



\bibliographystyle{plainnat}
\bibliography{../bib/timur}

\end{document}